\newcommand{\bu}{\bm u}
\newcommand{\bv}{\bm v}
\newcommand{\bbR}{\mathbb{R}}
\newcommand{\nab}{\nabla}
\newcommand{\p}{\partial}
\newcommand{\bH}{\bm H}
\newcommand{\bZ}{\bm Z}
\newcommand{\bz}{\bm z}
\newcommand{\bV}{\bm V}
\newcommand{\calT}{\mathcal{T}}
\newcommand{\bw}{\bm w}
\newcommand{\bcurl}{{\bf curl}}
\newcommand{\Div}{{\rm div}}
\newcommand{\mct}{\mathcal{T}_h}
\newcommand{\mcf}{\mathcal{F}_h}
\newcommand{\bn}{{\bm n}}
\newcommand{\pol}{\EuScript{P}}
\newcommand{\bpol}{\boldsymbol{\pol}}
\newcommand{\calS}{\mathcal{S}}
\newcommand{\calF}{\mathcal{F}}
\numberwithin{equation}{section}
\newtheorem{theorem}{Theorem}[section]
\newtheorem{lemma}[theorem]{Lemma}
\newtheorem{corollary}[theorem]{Corollary}
\newtheorem{proposition}[theorem]{Proposition}
\theoremstyle{definition}
\theoremstyle{remark}
\newtheorem{remark}[theorem]{Remark}
\newcommand{\be}{{\bm e}}
\newcommand{\bM}{{\bm M}}
\newcommand{\bkappa}{{\bm \kappa}}
\newcommand{\bPi}{{\bm \Pi}}
\newcommand{\bI}{{\bm I}}
\newcommand{\bpsi}{{\bm \psi}}
\def\jump#1{[\hspace{-1.5pt}[#1]\hspace{-1.5pt}]}
\newcommand{\rev}[1]{{ #1}}
\begin{document}

\title{A CutFEM divergence--free discretization for the Stokes problem}

\author{Haoran Liu}\address{Department of Mathematics, University of Pittsburgh, Pittsburgh, PA 15260. HAL104@pitt.edu} 
\author{Michael Neilan}\address{Department of Mathematics, University of Pittsburgh, Pittsburgh, PA 15260. neilan@pitt.edu} 
\author{Maxim Olshanskii}\address{Department of Mathematics, University of Houston, Houston, TX 77204. maolshanskiy@uh.edu}

\thanks{The first two authors were supported in part by NSF grant no. DMS-2011733. The third author was supported 
in part by NSF grants  DMS-1953535 and DMS-2011444.}

\thispagestyle{empty}

\begin{abstract}
We construct and analyze a CutFEM discretization
for the Stokes problem based on the Scott-Vogelius pair.
The discrete piecewise polynomial spaces
are defined on macro-element triangulations which are not fitted
to the smooth physical domain.  
 Boundary conditions are imposed via penalization through
 the help of a Nitsche-type discretization, whereas stability
 with respect to small and anisotropic cuts of the bulk elements is ensured by adding local 
 ghost penalty stabilization terms.  We show stability
 of the scheme as well as a divergence--free property of the 
discrete velocity outside an $O(h)$ neighborhood of the boundary.
To mitigate the error caused by the violation of the divergence-free condition, we introduce local grad-div stabilization. 
The error analysis shows that the grad-div parameter can scale like $O(h^{-1})$, allowing a rather heavy penalty for the violation of  
mass conservation, while still ensuring optimal order error estimates.
\end{abstract}

\subjclass{65N30,65N12,76D07}
\keywords{CutFEM, Stokes,divergence--free, convergence analysis}

\maketitle

\section{Introduction}
Originating from first principles,  mass conservation is a favorable property for numerical solutions to equations governing the motion of fluids.  For incompressible viscous fluids there is a recent effort in developing Galerkin  methods that conserve the mass through enforcing the discrete velocity to be 
divergence--free~\cite{Evans1,Falk1,Guz1,Guz2,Lehr1,Zhang}. In many cases however, the exactly divergence--free approximations come at a price of high-order spaces or confined to certain triangulations.
This extra complexity  makes it difficult, if at all possible, to apply them for modeling of flows with additional features such as propagating interfaces and free surfaces exhibiting large deformations.
One way to address the difficulty of computing flows with finite elements in heavily deforming volumes is to uncouple the volume triangulation from interface/boundary tracking. For sharp interface representations (e.g. by a level-set method~\cite{Sethian1} in contrast to  diffuse-interface approach~\cite{Ander1}) this suggests using geometrically unfitted finite element (FE) methods, e.g. XFEM~\cite{XFEM} or cutFEM~\cite{cutFEM}. While unfitted variants of  equal order pressure-velocity, Taylor--Hood and several other well-known finite element methods were studied recently~\cite{St1,BurmanHansbo14,GuzmanMaxim18,St2,St3,St4}, unfitted mass-conserving elements for primitive-variable formulations of the Stokes or incompressible Navier-Stokes equations are not well developed yet. 

This brings our attention to the Scott-Vogelius (SV) finite element pair~\cite{SV1} of continuous polynomial elements of degree $k$ for velocity and discontinuous polynomial elements of degree $(k-1)$ for pressure.  The SV element is known to be inf-sup stable~\cite{GR} for shape-regular triangulations of $\mathbb{R}^2$ and 
$k\ge 4$ provided the triangulation does not contain nearly singular vertices \cite{SV1,GS}.
For $\mathbb{R}^3$ the situation is more subtle and for a specific structured family of triangulations,
stability is known for $k\ge6$~\cite{Zhang2}. 
On triangulations resulting from a barycenteric (or Clough-Tocher/Alfeld) refinement of a macro triangulation the stability, however, follows in $\mathbb{R}^d$ for $k\ge d$~\cite{ArnoldQiu92,Zhang,Guz2}.  

Since the divergence of any FE velocity field belong to the pressure FE space, a standard mixed formulation of the Stokes problem using the SV element delivers pointwise divergence-free solutions, providing us with an example of a stable and relatively simple mass conserving finite element method.

A geometrically unfitted variant of the SV finite element in two dimensions was considered for the first time in~\cite{LNO}. In that paper, the authors 
constructed a boundary correction scheme, where the Stokes problem is solved
on a strict interior domain, and boundary data is transferred, within a Nitsche-type formulation, via a Taylor expansion.
Here we take a different approach: A boundary or interface condition on a surface cutting through a background mesh is imposed with the help of the Nitsche method~\cite{hansbo2002unfitted}, while stability with respect to small and anisotropic cuts of the bulk elements is ensured by adding local ghost penalty stabilization terms. This renders our approach as the cut-Scott-Vogelius finite element method.  The stability of pressure for cut elements requires an additional ghost penalty term defined on a thin strip of elements in a proximity of the boundary. Since this additional stabilizing term alters the divergence-condition in the mixed formulation, the resulting velocity is not divergence--free, strictly speaking. Nevertheless, we show that  pointwise mass conservation holds in the volume occupied by the fluid except in an $O(h)$ strip. Furthermore, to minimize the error caused by the violation of the divergence--free condition, we introduce local grad-div stabilization~\cite{Olsh1}. Our analysis reveals that the grad-div parameter can scale with $h^{-1}$, allowing a rather heavy penalty for the violation of  $\Div\bu=0$ in the strip and still ensuring an optimal order error estimate.

The remainder of paper  is organized as follows.  Section~\ref{sec:Prelim} is preparatory and collects necessary preliminaries on meshing and FE spaces definitions.         It proceeds with the formulation of finite element method and the proof of an internal mass-conservation property. Section~\ref{sec:Stab} addresses stability of the finite element formulation.  In section~\ref{sec:Conv} we prove our main result about the convergence of the method. 
\rev{We extend our method and analysis to two-dimensional Powell-Sabin splits in section~\ref{sec:PS}, and give}
results of a few numerical examples in section~\ref{sec:Num}.

\section{Preliminaries and Finite Element Method}\label{sec:Prelim}

Consider the Stokes problem on a bounded, contractible, and open domain $\Omega\subset \bbR^d$ with smooth boundary:
\begin{subequations}
		\label{eqn:Stokes}
	\begin{alignat}{2}
		\label{eqn:Stokes1}
		-{\Delta \bu} + \nab p & = {\bm f}\qquad &&\text{in }\Omega,\\
				\label{eqn:Stokes2}
		\Div \bu & = 0\qquad &&\text{in }\Omega,\\
		\bu& = {0}\qquad &&\text{on }\Gamma:=\p \Omega.
	\end{alignat}
\end{subequations}
To formulate an unfitted finite element method for \eqref{eqn:Stokes} we embed $\Omega$ into an open, polytopal domain  $S$, i.e.
$\bar{\Omega}\subset S$, and let
$\mct$ be a simplicial shape-regular triangulation of $S$.
For simplicity, we assume that $\calT_h$ is quasi-uniform.
We denote by $\mct^{ct}$ the resulting Clough-Tocher/Alfeld (CT)  triangulation
obtained by connecting the vertices of each element in $\mct$
with its barycenter \cite{LaiSchumakerBook,FGN20}.
Let 
\begin{equation}\label{eqn:mctiDef}
\mct^i:=\{T\in \mct: T\subset \Omega\},\qquad \Omega_h^i =  {\rm Int}\Big(\bigcup_{T\in \mct^i} \bar T\Big)
\end{equation}
be the set of interior simplices and interior domain, respectively.
We then define the analogous set with respect to the CT refinement (cf.~Figure \ref{fig:Meshes}):
\[
\mct^{ct,i}:=\{K\in \mct^{ct}:\ K\subset T,\ \exists T\in \mct^i\}.
\]

\begin{figure}[h]
\centering
\includegraphics[scale=0.5]{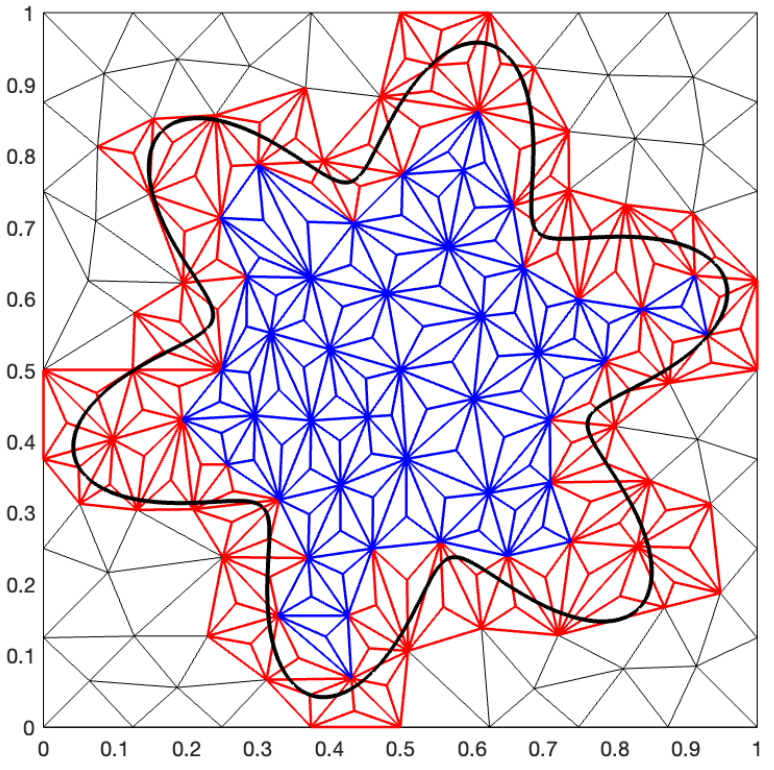}
\includegraphics[scale=0.5]{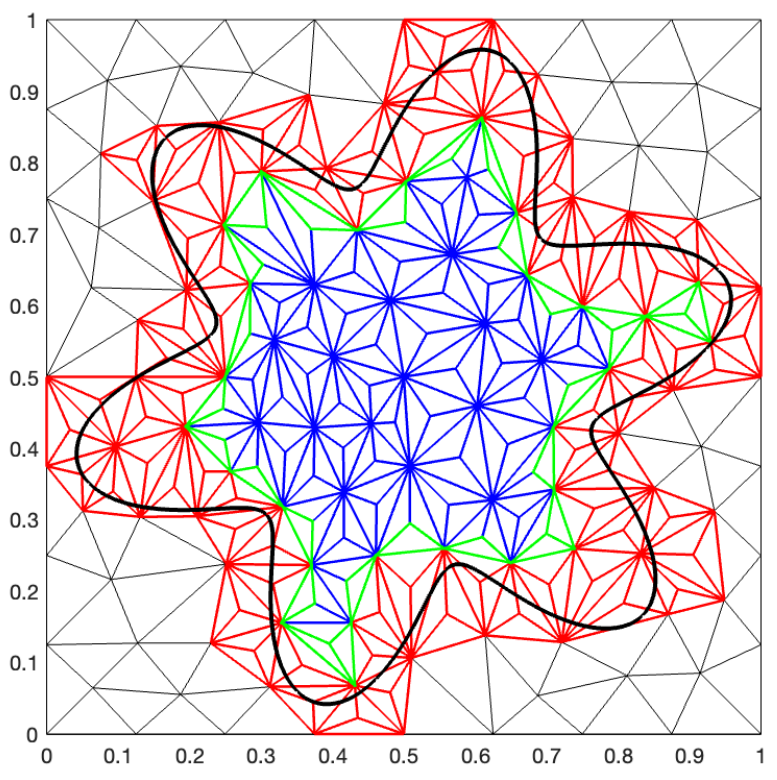}
\caption{\label{fig:Meshes}Left: Example of an interior Clough-Tocher mesh $\calT_h^{ct,i}$ (blue)
and boundary Clough-Tocher mesh $\calT_h^{ct,\Gamma}$ (red).
Right: The same mesh, but with $\widetilde \calT^{ct,\Gamma}_h\backslash \calT_h^{ct,\Gamma}$ (green)
and $\widetilde{\mathcal{T}}_h^{ct,i}$ (blue).
}
\end{figure}

\begin{remark} 
Note that
\[
 \Omega_h^i =  {\rm Int}\Big(\bigcup_{K\in \mct^{ct,i}} \bar K\Big),
\]
since we include in $\mct^{ct,i}$ only those micro-triangles which belong to  internal macro-triangles, there can be $K\in \mct^{ct}$ which are strictly  inside $\Omega$ but not in $\mct^{ct,i}$, i.e., the inclusion
 $
 \mct^{ct,i}\subset \{K\in \mct^{ct}: K\subset \Omega\}
$
 is generally strict.
\end{remark}

On the interior domain, we define $\mcf^i$ to be
the set of $(d-1)$-dimensional interior faces of the unrefined triangulation $\mct^i$; 
that is, $F\in \mcf^i$ provided there exists two distinct simplices $T_1,T_2\in \mct^i$
such that $F = \p T_1\cap \p T_2$.
We also let
\[
\mct^{\Gamma}:=\{T\in \mct: {\rm meas}_{d-1}(T\cap \Gamma)>0\},\qquad \Omega_h^\Gamma =  {\rm Int}\Big(\bigcup_{T\in \mct^\Gamma} \bar T\Big)
\]
to be the set of simplices that cut through the interface $\Gamma$ and the corresponding domain, respectively.

Define
\[
\mct^{e}:=\{T\in \mct: T\in\mct^i\ {\rm or}\ T\in\mct^\Gamma \},\qquad \Omega_h^e =  {\rm Int}\Big(\bigcup_{T\in \mct^e} \bar T\Big)
\]
to be the set of triangles that either are interior or cut through the interface $\Gamma$ and the corresponding domain, respectively.
We refer to these quantities as the exterior triangulation and exterior domain, respectively.
The analogous sets with respect to the CT refinement are given by
\[
\mct^{ct,\Gamma}:=\{K\in \mct^{ct}:\ K\subset T,\ \exists T\in \mct^{\Gamma}\},
\]
\[
\mct^{ct,e}:=\{K\in \mct^{ct}:\ K\subset T,\ \exists T\in \mct^{e}\}.
\]

Note that now we have
\[
\Omega_h^\Gamma =  {\rm Int}\Big(\bigcup_{K\in \mct^{ct,\Gamma}} \bar K\Big),\quad \text{and}\quad
\Omega_h^e =  {\rm Int}\Big(\bigcup_{K\in \mct^{ct,e}} \bar K\Big).
\]
\begin{remark}
Note that, in general, there exists
$K\in \mct^{ct,\Gamma}$ such that $\bar K\cap \bar \Omega = \emptyset$.
Consequently, in the finite element method presented below, 
there exists active basis functions with support strictly outside the physical domain $\Omega$.
\end{remark}

We define the sets of faces:
\begin{align*}
\mcf^\Gamma&:=\{F:\ F\ {\rm is\ a\ face\ in}\ \mct^\Gamma,\ F\not\subset\partial\Omega_h^e\},\\
\mcf^e&:=\{F:\ F\ {\rm is\ an\ interior\ face\ in}\ \mct^e\},\\
\mcf^{ct,\Gamma}&:=\{F:\ F\ {\rm is\ a\ face\ in}\ \mct^{ct,\Gamma},\ F\not\subset\partial\Omega_h^e\},\\
\mcf^{ct,e}&:=\{F:\ F\ {\rm is\ an\ interior\ face\ in}\ \mct^{ct,e}\}.
\end{align*}

For $K\in\mct^{ct,\Gamma}$, we define $K_\Gamma=\overline{K}\cap\Gamma$, so that $\sum_{K\in\mct^{ct,\Gamma}}|K_\Gamma|=|\Gamma|$.
For a simplex $K$, we set $h_K = {\rm diam}(h_K)$.
Note that, because of the quasi-uniformity and shape-regularity assumption, there holds
$h_K\approx h:=\max_{T\in \calT_h} h_T$ for all $K\in \calT_h^{ct,e}$
and $h_F \approx h$ for all $F\in \calF_h^{ct,e}$.
We denote by $\bn$ an outward normal of a domain which will be clear from its context.
The constant $C$ (with or without subscripts) will denote a generic positive constant that is independent of $h$,
how the boundary $\Gamma$ cuts the mesh, or any method-dependent parameters.

\rev{
For some triangulation $\mathcal{S}_h$ (e.g., $\mathcal{S}_h = \calT^{ct,e}_h$), 
we define the polynomial spaces:
\begin{alignat*}{2}
&\pol_k(\mathcal{S}_h) = \{v\in L^2(D_h):\ v|_K\in \pol_k(K)\ \forall K\in \mathcal{S}_h\},\quad &&\mathring{\pol}_k(\mathcal{S}_h) = \pol_k(\mathcal{S}_h)\cap L^2_0(D_h),\\
&\bpol_k^c(\mathcal{S}_h) = [\pol_k(\mathcal{S}_h)\cap H^1(D_h)]^d,\quad &&\mathring{\bpol}_k^c(\mathcal{S}_h) = \bpol_k^c(\mathcal{S}_h)\cap \bH^1_0(D_h),
\end{alignat*}
where $D_h ={\rm int}( \cup_{K\in \mathcal{S}_h} \bar K)$, and $\pol_k(K)$ is the space of scalar polynomials of degree $\le k$ with domain $K$.}
For an integer $k\ge d$, define the finite element spaces with respect to $\mct^{ct,e}$:
\begin{align*}
\bV_h &= \{\rev{\bv\in \bpol_k^c(\mct^{ct,e})},\ 
 \int_{\p\Omega_h^i}\bv\cdot\bn=0\},\qquad
Q_h   = \{\rev{q\in \pol_{k-1}(\mct^{ct,e})},\ \int_{\Omega_h^i}q=0\},
\end{align*}
%
and the analogous spaces with respect to the interior mesh
\begin{align*}
\rev{\bV^i_h = \mathring{\bpol}^c_k(\mct^{ct,i}),\qquad Q_h^i = \mathring{\pol}_{k-1}(\mct^{ct,i}).}
%
\end{align*}
The definitions of the spaces imply that the divergence maps $\bV_h$ into
$Q_h$.  Likewise, the divergence maps $\bV_h^i$ into $Q_h^i$.

\subsection{Finite Element Method}
Here, we will define a modified version of the finite element method given in \cite[Section 3.2]{GuzmanMaxim18}
based on the Scott--Vogelius pair.
First we define the mesh-dependent bilinear form
with grad-div stabilization:
\begin{equation}\label{eqn:ahDef}
a_h(\bu,\bv):= (\nabla \bu, \nabla \bv)+\gamma (\Div \bu, \Div \bv)+s_h(\bu,\bv)+\bm{{\rm j}}_h (\bu,\bv)+\eta j_h(\bu,\bv),
\end{equation}
where $(\cdot,\cdot)$ denotes the $L^2$ inner product over $\Omega$, 
$\gamma\ge 0$ is the grad-div parameter, and
 $\eta>0$ is a Nitsche-type penalty parameter,
\begin{align*}
s_h(\bu,\bv)&=-\int_{\Gamma}((\bn^\intercal\nabla\bu)\cdot\bv+(\bn^\intercal\nabla\bv)\cdot\bu),\\
j_h(\bu,\bv)&=\sum_{K\in \mct^{ct,\Gamma}}\frac{1}{h_K}\int_{K_\Gamma}\bu\cdot\bv,\\
\bm{{\rm j}}_h (\bu,\bv)&=\sum_{F\in\mcf^{ct,\Gamma}}\sum_{\ell=1}^{k}h_F^{2\ell-1}\int_{F}[\partial^\ell_n\bu][\partial^\ell_n\bv],
\end{align*}
where we recall $K_\Gamma = \bar K\cap \Gamma$.
Here, $\p_n^\ell \bv$ denotes the derivative of order $\ell$ of $\bv$ in the direction $\bv$,
and $[\bw]|_F$ denotes the jump of a function $\bw$ across $F$.
The continuity equations are discretized via the bilinear form
\begin{equation}\label{eqn:bDef}
b(p,\bv):=-(p,\Div \bv)+\int_\Gamma (\bv\cdot \bn)p.
\end{equation}
The finite element method reads:
find $(\bu_h,p_h)\in \bV_h\times Q_h$ such that
\begin{align}\label{fem}
\Big\{\begin{array}{ll}
a_h(\bu_h,\bv_h)+b(p_h,\bv_h)=(\bm f,\bv_h),\\
b(q_h,\bu_h)-{\frac{1}{1+\gamma}}J_h(p_h,q_h)=0
\end{array}
\end{align}
for all $\bv_h\in\bV_h$, $q_h\in Q_h$, where
\[
J_h(q,p)=\sum_{F\in\mcf^{ct,\Gamma}}\sum_{\ell=0}^{k-1}h_F^{2\ell+1}\int_{F}[\partial^\ell_nq][\partial^\ell_np].
\]

\begin{remark}
Compared to other CutFEM discretizations \cite{BurmanHansbo14,cutFEM,St2,GuzmanMaxim18},
we have modified the above finite element method by modifying the terms $\bm{{\rm j}}_h (\bu,\bv)$, and $J_h(q,p).$
In particular, for $\bm{{\rm j}}_h (\bu,\bv)$, and $J_h(q,p)$, instead of summing over all faces $F$ in $\mcf^\Gamma$, we are now summing over all faces $F$ from $\mcf^{ct,\Gamma}.$
Such faces may be completely outside the physical domain $\Omega$.

Likewise, for the term $j_h(\bu,\bv)$, instead of summing over all triangles $T$ from $\mct^\Gamma$, we are now summing over all triangles $K$ from $\mct^{ct,\Gamma}$.  However, since $\bigcup_{K\in\mct^{ct,\Gamma}}K_{\Gamma}=\bigcup_{T\in\mct^{\Gamma}}T_{\Gamma}$, $j_h(\bu,\bv)$ is equivalent to the analogous term  in, e.g., \cite{GuzmanMaxim18}.  Compared to the previous work on unfitted Stokes elements, we also introduce the grad-div stabilization. We show later that it effectively acts only on a narrow boundary strip and allows the control of mass-conservation loss due to ghost penalty stabilization. This mechanism may be of help to boost the accuracy for other unfitted Stokes FE methods.
\end{remark}

\begin{remark}
An accurate computation of integrals over cut elements and $\Gamma$ as they appear in the definition of the bilinear forms is not always feasible. One way to handle this is to introduce a polyhedral approximation of $\Omega$ allowing for standard quadrature rules both on cut and uncut elements.  Such an approximation, however, leads to second order geometric consistency error which is suboptimal for Scott-Vogelius elements. To ensure a geometric error of the same order or higher than the finite element approximation error, we define numerical quadrature rules for cut elements and boundary integrals using the isoparametric approach proposed in \cite{Lehrenfeld16}.
\end{remark}

\subsection{Divergence--free property}
Notice that, in contrast to the domain-fitted Scott-Vogelius FEM,
 the method \eqref{fem} does not produce exactly divergence-free solution
 due to the stability term $J_h(\cdot,\cdot)$ in the discrete continuity equations.
However, we  show that the discrete velocity solution $\bu_h$ is divergence-free on a mesh-dependent interior domain. 

We define the set of all simplices in $\mct^{ct,\Gamma}$
together with those in $\mct^{ct,i}$ that are touching these simplices (cf.~Figure \ref{fig:Meshes}):
\[
\widetilde{\mathcal{T}}_h^{ct,\Gamma} =\{K\in \mct^{ct,e}:\ {\rm meas}_{d-1} (\bar K\cap \bar K')>0\ \exists K'\in \mct^{ct,\Gamma}\}. 
\]
This set's complement is given by an interior set of elements:
\begin{equation}\label{eqn:TildeThCT}
\widetilde{\mathcal{T}}_h^{ct,i} = \mct^{ct,e}\backslash \widetilde{\mathcal{T}}_h^{ct,\Gamma} \subset \mct^{ct,i},
\end{equation}
and we define the domains
\[
\widetilde\Omega_h^{ct,\Gamma} =  {\rm Int}\Big(\bigcup_{K\in \widetilde{\mathcal{T}}_h^{ct,\Gamma}} \bar K\Big),\qquad
\widetilde \Omega_h^{ct,i} = {\rm Int}\Big(\bigcup_{T\in \widetilde{\mathcal{T}}_h^{ct,i}} \bar K\Big).
\]

\begin{lemma}\label{L:divfree}
(Divergence-free property) Suppose that $\bu_h\in \bV_h$ satisfies \eqref{fem}.
Then $\rev{\Div \bu_h} = 0$ on $\widetilde \Omega_h^{ct,i}$.
\end{lemma}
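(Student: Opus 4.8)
The plan is to test the discrete continuity equation in \eqref{fem} with a function built from $\Div\bu_h$ itself, truncated to the deep interior $\widetilde\Omega_h^{ct,i}$, and to exploit that every boundary and stabilization contribution in \eqref{fem} is localized in the layer $\widetilde{\mathcal{T}}_h^{ct,\Gamma}$ adjacent to $\Gamma$. Concretely, let $w$ equal $\Div\bu_h$ on $\widetilde\Omega_h^{ct,i}$ and equal $0$ on the remaining simplices of $\mct^{ct,e}$. Since $\bu_h|_K\in\bpol_k(K)$ we have $w|_K\in\pol_{k-1}(K)$, so $w$ is an admissible piecewise polynomial. To obtain a legitimate test function I subtract its interior average and set $q_h:=w-\bar w$ with $\bar w:=|\Omega_h^i|^{-1}\int_{\Omega_h^i}w$, so that $q_h\in Q_h$.

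Two bookkeeping observations make the stabilization and Nitsche terms vanish on $w$. First, any face $F\in\mcf^{ct,\Gamma}$ is a face of some $K'\in\mct^{ct,\Gamma}$, and by the definition of $\widetilde{\mathcal{T}}_h^{ct,\Gamma}$ both simplices sharing $F$ belong to $\widetilde{\mathcal{T}}_h^{ct,\Gamma}$; since $w\equiv 0$ on all of $\widetilde{\mathcal{T}}_h^{ct,\Gamma}$, the jumps $[\partial_n^\ell w]$ vanish on each such $F$, so $J_h(p_h,w)=0$, and as constants carry no jumps $J_h(p_h,\bar w)=0$, giving $J_h(p_h,q_h)=0$. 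Second, $\Gamma$ is covered by the cut simplices $\mct^{ct,\Gamma}$, on which $w\equiv 0$, so $\int_\Gamma(\bu_h\cdot\bn)\,w=0$. The only genuinely global ingredient is the constant $\bar w$: here I would invoke the divergence theorem on $\Omega$, $\int_\Omega\Div\bu_h=\int_\Gamma\bu_h\cdot\bn$, which yields $b(\bar w,\bu_h)=\bar w\bigl(-\int_\Omega\Div\bu_h+\int_\Gamma\bu_h\cdot\bn\bigr)=0$.

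Collecting these, testing \eqref{fem} with $q_h$ collapses to $b(w,\bu_h)=0$. Because $w$ is supported in $\widetilde\Omega_h^{ct,i}\subset\Omega$ and equals $\Div\bu_h$ there, $b(w,\bu_h)=-(w,\Div\bu_h)=-\|\Div\bu_h\|_{L^2(\widetilde\Omega_h^{ct,i})}^2$, so $\|\Div\bu_h\|_{L^2(\widetilde\Omega_h^{ct,i})}=0$, which is the assertion. I expect the main obstacle to be precisely the mean-value constraint defining $Q_h$: the truncation $w$ need not be mean-free over $\Omega_h^i$, and a careless correction would reintroduce jumps across $\mcf^{ct,\Gamma}$ or a nonzero boundary term. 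The argument hinges on the two cancellations above — the constant corrector is annihilated by the divergence theorem, while the ghost-penalty and Nitsche terms are annihilated because $\mcf^{ct,\Gamma}$ and $\Gamma$ sit entirely inside the layer $\widetilde{\mathcal{T}}_h^{ct,\Gamma}$ where $w$ vanishes.
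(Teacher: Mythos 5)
Your proof is correct. Every claim checks out: $q_h=w-\bar w$ does lie in $Q_h$ (it is piecewise $\pol_{k-1}$ and mean-free over $\Omega_h^i$ by construction); every face $F\in\mcf^{ct,\Gamma}$ is shared by two simplices of $\widetilde{\mathcal{T}}_h^{ct,\Gamma}$, on which $q_h$ is the constant $-\bar w$, so $J_h(p_h,q_h)=0$; $q_h=-\bar w$ on $\Gamma$, and the constant contribution to $b(\cdot,\bu_h)$ is annihilated by the divergence theorem on $\Omega$; what survives is $-\|\Div\bu_h\|_{L^2(\widetilde\Omega_h^{ct,i})}^2=0$.

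The route is genuinely different from the paper's, and shorter. The paper runs a four-step cascade of locally supported test functions: indicator-type functions (with a mean-zero corrector placed on a designated element $K_*$) are used to bootstrap the identity $\int_K\Div\bu_h=0$ for every $K\in\widetilde{\mathcal{T}}_h^{ct,i}$, and only then is $\Div\bu_h$ itself used as a test function, one element at a time, with a constant corrector on a second element whose contribution vanishes thanks to the previously established element-mean property. You collapse all of this into a single global test function by observing that the mean-value corrector can be taken to be a \emph{global} constant, and that $b(\mathrm{const},\bu_h)=\mathrm{const}\cdot\bigl(-\int_\Omega\Div\bu_h+\int_\Gamma\bu_h\cdot\bn\bigr)=0$ identically — the same divergence-theorem cancellation the paper invokes in its Step 1, but deployed once and for all. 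The paper's element-by-element version has the side benefit of exhibiting the intermediate fact $\int_K\Div\bu_h=0$ on each interior cell explicitly, but for the statement of the lemma your argument is cleaner and equally rigorous.
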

\begin{proof}
We show the result in four steps.
\begin{enumerate}
\item

Fix $K_*\in \widetilde{\mathcal{T}}_h^{ct,i}$, and set
\begin{align*}
q_1 = \left\{
\begin{array}{ll}
1 & \text{on }\widetilde \Omega_h^{ct,\Gamma},\\
-(|\widetilde \Omega_h^{ct,\Gamma}|-|\Omega_h^\Gamma|)/|K_*| & \text{on }K_*,\\
0 & \text{otherwise.}
\end{array}
\right.
\end{align*}
We then have
\begin{align*}
\int_{\Omega_h^i} q_1 = \int_{\widetilde \Omega_h^{ct,\Gamma}\setminus\Omega_h^\Gamma} q_1 + \int_{K_*} q_1
 = (|\widetilde \Omega_h^{ct,\Gamma}|-|\Omega_h^\Gamma|) + |K_*|\Big(-(|\widetilde \Omega_h^{ct,\Gamma}|-|\Omega_h^\Gamma|)/|K_*|\Big) = 0.
\end{align*}
Thus, $q_1\in Q_h$.
We also have $J(p_h,q_1)=0$ because $q_1$ is constant on $\widetilde \Omega^{ct,\Gamma}_h$.
It then follows from \eqref{fem} that
\begin{align*}
- \frac{(|\widetilde \Omega_h^{ct,\Gamma}|-|\Omega_h^\Gamma|)}{|K_*|}\int_{K_*} \rev{\Div \bu_h} 
+ \int_{\Omega\cap \widetilde \Omega_h^{ct,\Gamma}} \rev{\Div \bu_h}
-\int_{\Gamma} \bu_h\cdot \bn= 0,
\end{align*}
and therefore
\begin{align}\label{eqn:DivStep1}
\frac{1}{|K_*|}\int_{K_*} \rev{\Div \bu_h}
%
&=  \frac{1}{|\widetilde \Omega_h^{ct,\Gamma}|-|\Omega_h^\Gamma|} \Big(\int_{\Omega\cap \widetilde \Omega_h^{ct,\Gamma}}
\rev{\Div \bu_h}-\int_{\Omega} \nabla \cdot \bu_h\Big)\\
\nonumber&  = \frac{-1}{|\widetilde \Omega_h^{ct,\Gamma}|-|\Omega_h^\Gamma|} \int_{\widetilde \Omega_h^{ct,i}} \rev{\Div \bu_h}.
%
\end{align}

\item  Fix $K\in \widetilde{\mathcal{T}}_h^{ct,i}\backslash \{K_*\}$, and set
\begin{align*}
q_2 = \left\{
\begin{array}{ll}
1 & \text{on }K,\\
-\frac{|K|}{|K_*|} & \text{on }K_*,\\
0 & \text{otherwise.}
\end{array}
\right.
\end{align*}
Then $q_2\in Q_h$, and by \eqref{fem}, we conclude
\[
\int_K \rev{\Div \bu_h} = \frac{|K|}{|K_*|} \int_{K_*} \rev{\Div \bu_h}.
\]
We then sum this expression over $K\in \widetilde \calT_h^{ct,i}$ to conclude
\begin{align}\label{eqn:DivStep2}
\int_{\widetilde \Omega_h^{ct,i}}  \rev{\Div \bu_h} = \frac{|\widetilde \Omega_h^{ct,i}|}{|K_*|} \int_{K_*} \rev{\Div \bu_h}.
\end{align}

\item We combine \eqref{eqn:DivStep1} and \eqref{eqn:DivStep2} to obtain
\begin{align*}
\int_{\widetilde \Omega_h^{ct,i}} \rev{\Div \bu_h} = -\frac{|\widetilde \Omega_h^{ct,i}|}{|\widetilde \Omega_h^{ct,\Gamma}|-|\Omega_h^\Gamma|} \int_{\widetilde \Omega_h^{ct,i}} \rev{\Div \bu_h},
\end{align*}
which implies
\begin{align*}
\int_{\widetilde \Omega_h^{ct,i}}  \rev{\Div \bu_h} = 0.
\end{align*}
Using \eqref{eqn:DivStep1}, and noting that $K_*\in \widetilde \calT_h^{ct,i}$ was arbitrary, we have
\begin{align*}
\int_K  \rev{\Div \bu_h} = 0\qquad \forall K\in \widetilde \calT_h^{ct,i}.
\end{align*}

\item Fix $K,K_\dagger\in \widetilde{\mathcal{T}}_h^{ct,i}$, and set
\[
q_3 = 
\left\{
\begin{array}{ll}
 \rev{\Div \bu_h}& \text{on }K_\dagger,\\
c & \text{on }K,\\
0 & \text{otherwise,}
\end{array}
\right.
\]
where $c\in \mathbb{R}$ is chosen such that $q_3\in Q_h$.
Then using \eqref{fem},
\begin{align*}
\int_{K_\dagger} | \rev{\Div \bu_h}|^2 = - c \int_{K}  \rev{\Div \bu_h} = 0.
\end{align*}
Thus, $ \rev{\Div \bu_h}= 0$ on $\widetilde \Omega_h^{ct,i}$.

\end{enumerate}
\end{proof}

\section{Stability}\label{sec:Stab}
In this section, we prove inf-sup stability of the finite element method
and derive a priori estimates.  As a first step, 
we state an inf-sup stability result with respect to the finite element
spaces with support on the interior domain $\Omega_h^i$.

To do so, we require
two  mesh-dependent norms
\[
\begin{split}
\|\bu\|_{\bV_{0,h}}^2 &= |\bu|^2_{H^1(\Omega)}+\eta j_h(\bu,\bu)+\bm{{\rm j}}_h(\bu,\bu),\\
\|\bu\|_{\bV_h}^2 &=\|\bu\|_{\bV_{0,h}}^2+ \gamma\|\Div\bu\|^2_{L^2(\Omega)},
\end{split}
\]
as well as the associated dual norm
\[
\|{\bm f}\|_{\bV_h'} = \sup_{\bv\in \bV_h\backslash \{0\}} \frac{({\bm f},\bv)}{\|\bv\|_{\bV_h}}.
\]
\begin{remark}\label{rem:NormComment}
By definition, there holds $\|\bz\|_{\bV_{h}}\le (1+\gamma)^{1/2}\|\bz\|_{\bV_{0,h}}$
for all $\bz\in \bV_h$.
\end{remark}

We first need an inf-sup stability estimate for the SV element in the interior domain $\Omega_h^i$. The estimate is formulated below in Theorem~\ref{Thm1}. Note that $\Omega_h^i$ is mesh dependent and \eqref{inf-sup} does not  follow easily from an  ``inf-sup-stability'' of any finite element pair (see discussion in \cite{GuzmanMaxim18}). The property \eqref{inf-sup} was only \emph{assumed} to hold in earlier publications, e.g. \cite{BurmanHansbo14,St2}, on unfitted FEMs for the Stokes problem, and has been proved for $\bpol_2-\pol_1$ element in \cite{St3} and for several other FE pairs in \cite{GuzmanMaxim18}. The latter paper does not cover any divergence-free elements.   

\begin{theorem}\label{Thm1}
There exists a constant $\theta>0$ and a constant $h_0>0$ 
such that  we have the following result for $h\leq h_0$
\begin{align}\label{inf-sup}
\theta \|q\|_{L^2(\Omega_h^i)}\leq \sup_{\bv\in \bV_h^i\backslash \{0\}}\frac{\int_{\Omega_h^i}  (\Div \bv)q }{\|\bv\|_{H^1(\Omega_h^i)}}    
\qquad \forall q\in Q_h^i.
\end{align}
The constant $\theta>0$ is independent of $h$.
\end{theorem}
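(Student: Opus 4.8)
The plan is to recognize that the interior pair $(\bV_h^i,Q_h^i)$ is precisely the Scott--Vogelius pair of degree $k\ge d$ on the Alfeld (Clough--Tocher) split $\mct^{ct,i}$ of the shape-regular, quasi-uniform macro-triangulation $\mct^i$ of the polytopal domain $\Omega_h^i$. For such splits the surjectivity of $\Div:\bV_h^i\to Q_h^i$, and hence \eqref{inf-sup}, is classical \cite{ArnoldQiu92,Zhang,Guz2}; the genuinely new point is that $\theta$ must be \emph{uniform} in $h$ even though the domain $\Omega_h^i$ and its mesh vary with $h$. Accordingly I would prove \eqref{inf-sup} by exhibiting, for each $q\in Q_h^i$, a velocity $\bv\in\bV_h^i$ with $\int_{\Omega_h^i}(\Div\bv)\,q\gtrsim\|q\|_{L^2(\Omega_h^i)}^2$ and $\|\bv\|_{H^1(\Omega_h^i)}\lesssim\|q\|_{L^2(\Omega_h^i)}$, the implied constants depending only on the shape-regularity of $\mct$ and, through $h_0$, on $\Omega$.

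The construction splits $q$ into its macro-element means and fluctuations. Let $\bar q$ denote the $L^2(\Omega_h^i)$-projection of $q$ onto functions constant on each macro-simplex $T\in\mct^i$, and write $q=\bar q+(q-\bar q)$. On each $T$ the fluctuation $(q-\bar q)|_T$ has zero mean, so the local Alfeld stability on the single patch $T$ (valid for $k\ge d$) yields $\bv_T\in\bpol_k$ supported in the interior of $T$ with $\Div\bv_T=(q-\bar q)|_T$ and $\|\bv_T\|_{H^1(T)}\lesssim\|q-\bar q\|_{L^2(T)}$; since this estimate is scale-invariant, its constant depends only on shape-regularity. Summing the $\bv_T$ gives $\bv_1\in\bV_h^i$ with $\Div\bv_1=q-\bar q$ and $\|\bv_1\|_{H^1(\Omega_h^i)}\lesssim\|q-\bar q\|_{L^2(\Omega_h^i)}$. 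For the coarse part $\bar q\in L^2_0(\Omega_h^i)$ I would invoke the continuous inf-sup (Bogovskii) property of $\Omega_h^i$ to obtain $\bw\in\bH^1_0(\Omega_h^i)$ with $\Div\bw=\bar q$ and $\|\bw\|_{H^1(\Omega_h^i)}\lesssim\beta(\Omega_h^i)^{-1}\|\bar q\|_{L^2(\Omega_h^i)}$, where $\beta(\Omega_h^i)$ is the continuous inf-sup constant of $\Omega_h^i$, and then apply a Fortin-type interpolant $\bPi_h:\bH^1_0(\Omega_h^i)\to\bV_h^i$ that preserves the divergence moment on each macro-simplex, $\int_T\Div(\bPi_h\bw)=\int_T\Div\bw$, and is $H^1$-stable; such operators are standard for Scott--Vogelius discretizations on shape-regular meshes \cite{GuzmanMaxim18}. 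Setting $\bv_2=\bPi_h\bw$ gives $\int_{\Omega_h^i}(\Div\bv_2)\,\bar q=\int_{\Omega_h^i}(\Div\bw)\,\bar q=\|\bar q\|_{L^2(\Omega_h^i)}^2$ because $\bar q$ is macro-element-wise constant. A combination $\bv=\bv_1+\delta\bv_2$ with $\delta$ small then bounds $\int_{\Omega_h^i}(\Div\bv)\,q$ below by $\gtrsim\|q\|_{L^2(\Omega_h^i)}^2$ after absorbing the cross term $\int_{\Omega_h^i}(\Div\bv_2)(q-\bar q)$ via Young's inequality and using $\int_{\Omega_h^i}(q-\bar q)\bar q=0$, while $\|\bv\|_{H^1(\Omega_h^i)}\lesssim\|q\|_{L^2(\Omega_h^i)}$.

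Equivalently, the same ingredients can be organized through Stenberg's macro-element technique, taking each Alfeld patch $T\in\mct^i$ as a macro-element: the local condition that the only macro-element pressure modes annihilating all interior-supported divergences are the constants is exactly the $k\ge d$ patch result above, and the residual coarse inf-sup reduces to the continuous stability of $\Omega_h^i$.

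The hard part, and the only place where the smoothness of $\Gamma$ and the smallness $h\le h_0$ enter, is the uniform lower bound $\beta(\Omega_h^i)\ge\beta_0>0$ on the continuous inf-sup constant of the moving polytopal domain $\Omega_h^i$. I would establish this by viewing $\Omega_h^i$ as a uniform perturbation of the fixed smooth domain $\Omega$: since $\Gamma$ is smooth and $\Omega_h^i\subset\Omega$ with $\mathrm{dist}(\Gamma,\partial\Omega_h^i)=O(h)$, for $h\le h_0$ the domains $\Omega_h^i$ form a family of uniformly Lipschitz domains, and the norm of the Bogovskii right-inverse of the divergence is stable under such perturbations, so $\beta(\Omega_h^i)$ is bounded below independently of $h$. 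All remaining constants (local solves, Fortin stability, the projection onto $\bar q$) are controlled purely by shape-regularity and are therefore automatically uniform; hence $\theta$ depends only on $\beta_0$ and the mesh regularity, as claimed.
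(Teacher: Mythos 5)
Your proposal is correct in substance and follows essentially the route that the paper itself delegates to its references: the paper gives no proof of Theorem \ref{Thm1}, citing instead \cite{LNO} for $k=d=2$ and the $\bpol_d$--$\pol_0$ stability result of \cite{GuzmanMaxim18} for the general case, and that cited argument is exactly your macro-element (Stenberg) decomposition --- local surjectivity of $\Div$ on each Alfeld-split macro-simplex for the fluctuation $q-\bar q$, plus a coarse-scale inf-sup for the piecewise-constant part $\bar q$, glued together with a small parameter $\delta$. Your handling of the fluctuation, the cross terms, and the scaling of the local solves is fine and needs no further comment.

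The one place where your write-up is materially thinner than what the theorem requires is the coarse-scale step: you reduce it to a uniform lower bound $\beta(\Omega_h^i)\ge\beta_0>0$ for the continuous Bogovskii/inf-sup constant of the moving polytopal domains $\Omega_h^i$, and justify this in one sentence by asserting that $\{\Omega_h^i\}_{h\le h_0}$ is a family of \emph{uniformly} Lipschitz domains. That is precisely the delicate point, and ``$\partial\Omega_h^i$ lies within $O(h)$ of the smooth surface $\Gamma$'' does not by itself give uniform Lipschitz graph constants for a sawtooth polyhedral boundary; one must argue (using shape regularity and quasi-uniformity, e.g.\ via a covering of $\Omega$ by finitely many domains star-shaped with respect to balls, which $\Omega_h^i$ inherits for $h\le h_0$) that the relevant constants are controlled independently of $h$ and of how $\Gamma$ cuts the mesh. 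This is exactly the content of the $\bpol_d$--$\pol_0$ stability result in \cite{GuzmanMaxim18} that the paper invokes in place of a proof; so you have correctly located the crux of the theorem, but the argument as written does not fully discharge it and would need either that citation or the star-shaped covering argument spelled out.
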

The proof of this theorem in the case $k=d=2$ can be found in \cite[Section 4.2]{LNO}.
The general case $k\ge d$ ($d\in \{2,3\}$) follows verbatim using the $\bpol_d-\pol_0$
stability result in \cite{GuzmanMaxim18}: A local inf-sup stability result for $q\in\pol_{k-1}(K)$ with $\int_K q=0$ on the CT split of each $K\in\calT_h^i$  (Theorem~3.1 in~\cite{Guz2}) is applied together with the global inf-sup stability of  $\bpol_d-\pol_0$ element for the macro-triangulation $\calT_h^i$. These two results are combined by standard arguments, see e.g. Lemma~4.7 in \cite{LNO} or Proposition~6.1 in \cite{Guz2}, to yield \eqref{inf-sup}. We skip including further details.


\begin{corollary}\label{corollary1}
The following stability is satisfied
\begin{align}\label{c1}
   \theta_*\|q\|_{L^2(\Omega)}\le \mathop{\sup_{\bv\in \bV_h\backslash \{0\}}}_{{\rm supp}(\bv)\subset\Omega_h^i}\frac{b(\bv,q)}{\|\bv\|_{\bV_{0,h}}}+J_h^{1/2}(q,q)\qquad \forall\ q\in Q_h,
 \end{align}
where $\theta_* > 0$ is independent of $h$ and the position of $\Gamma$ in the mesh.
\end{corollary}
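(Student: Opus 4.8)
The plan is to transfer the interior inf-sup stability of Theorem~\ref{Thm1} to the full pressure space $Q_h$, paying for the extension into the boundary strip with the ghost-penalty seminorm $J_h^{1/2}(\cdot,\cdot)$. Fix $q\in Q_h$. Since $\int_{\Omega_h^i}q=0$ and $q$ is piecewise $\pol_{k-1}$, its restriction to $\Omega_h^i$ lies in $Q_h^i$. I would split the argument into two independent estimates,
\[
\|q\|_{L^2(\Omega_h^i)}\ \le\ C\sup_{\substack{\bv\in\bV_h\\ \mathrm{supp}(\bv)\subset\Omega_h^i}}\frac{b(q,\bv)}{\|\bv\|_{\bV_{0,h}}},\qquad \|q\|_{L^2(\Omega_h^\Gamma)}\ \le\ C\big(\|q\|_{L^2(\Omega_h^i)}+J_h^{1/2}(q,q)\big),
\]
which are then combined through $\|q\|_{L^2(\Omega)}^2=\|q\|_{L^2(\Omega_h^i)}^2+\|q\|_{L^2(\Omega\setminus\Omega_h^i)}^2$ together with the inclusion $\Omega\setminus\Omega_h^i\subset\Omega_h^\Gamma$.

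For the first estimate I apply Theorem~\ref{Thm1} to $q|_{\Omega_h^i}\in Q_h^i$ to obtain $\bv\in\bV_h^i$ with $\int_{\Omega_h^i}(\Div\bv)\,q\ \ge\ \theta\,\|q\|_{L^2(\Omega_h^i)}\,\|\bv\|_{H^1(\Omega_h^i)}$, and I extend $\bv$ by zero to $\Omega_h^e$. Because $\bv\in\bH^1_0(\Omega_h^i)$ vanishes on $\partial\Omega_h^i$, the extension belongs to $\bV_h$, has support in $\Omega_h^i$, and satisfies $\int_{\partial\Omega_h^i}\bv\cdot\bn=0$. Since $\bv\equiv 0$ on $\Gamma$ and on $\Omega\setminus\Omega_h^i$, one gets $b(q,\bv)=-\int_{\Omega_h^i}(\Div\bv)\,q$ and $j_h(\bv,\bv)=0$; moreover $\bm{{\rm j}}_h(\bv,\bv)$ only sees faces $F\subset\partial\Omega_h^i$, where a trace inequality followed by an inverse estimate on the interior side bounds each term $h_F^{2\ell-1}\|[\partial^\ell_n\bv]\|_{L^2(F)}^2$ by $C\|\nabla\bv\|_{L^2(K)}^2$ on the adjacent interior element $K$. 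Combined with the Poincaré inequality on $\Omega_h^i$, this yields $\|\bv\|_{\bV_{0,h}}\le C\|\bv\|_{H^1(\Omega_h^i)}$. Replacing $\bv$ by $\pm\bv$ to fix the sign of $b(q,\bv)$ then gives the first estimate.

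The second estimate is the technical heart and the step I expect to be the main obstacle. I would establish a ghost-penalty norm equivalence controlling the strip by the interior together with $J_h$. The mechanism is the standard chain argument: for adjacent elements $K_1,K_2$ sharing a face $F\in\mcf^{ct,\Gamma}$, comparing $q|_{K_1}$ with the polynomial extension of $q|_{K_2}$ and Taylor-expanding the difference across $F$ gives
\[
\|q\|_{L^2(K_1)}\ \le\ C\|q\|_{L^2(K_2)} + C\Big(\sum_{\ell=0}^{k-1}h_F^{2\ell+1}\|[\partial^\ell_n q]\|_{L^2(F)}^2\Big)^{1/2},
\]
with a constant depending only on shape-regularity. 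Because $\Gamma$ is smooth and $h\le h_0$, every element of $\mct^{ct,\Gamma}$ is joined to an element of $\mct^{ct,i}$ by a chain of uniformly bounded length whose interfaces all lie in $\mcf^{ct,\Gamma}$; iterating the per-face bound along such chains and summing over the strip (with finite overlap) produces the asserted bound. The delicate points are exactly the uniformity of the constant with respect to the position of $\Gamma$ — secured by the ghost-penalty faces, so that no estimate degenerates on arbitrarily small cut elements — and the uniform bound on the chain length, guaranteed by the one-macro-element width of the strip for $h\le h_0$.

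Finally I would combine the two estimates:
\[
\|q\|_{L^2(\Omega)}\le \|q\|_{L^2(\Omega_h^i)}+\|q\|_{L^2(\Omega_h^\Gamma)}\le C\big(\|q\|_{L^2(\Omega_h^i)}+J_h^{1/2}(q,q)\big)\le C\Big(\sup_{\substack{\bv\in\bV_h\\ \mathrm{supp}(\bv)\subset\Omega_h^i}}\frac{b(q,\bv)}{\|\bv\|_{\bV_{0,h}}}+J_h^{1/2}(q,q)\Big),
\]
and setting $\theta_*=1/C$ yields \eqref{c1}. All constants depend only on shape-regularity and $k$, hence are independent of $h$ and of how $\Gamma$ meets the mesh.
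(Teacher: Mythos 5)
Your proposal is correct and follows essentially the same route as the paper: the interior inf-sup result of Theorem~\ref{Thm1} applied to $q|_{\Omega_h^i}$ with the test function extended by zero (so that $j_h(\bv,\bv)=0$ and $\bm{{\rm j}}_h(\bv,\bv)\le C\|\bv\|^2_{H^1(\Omega_h^i)}$, hence $\|\bv\|_{\bV_{0,h}}\le C\|\bv\|_{H^1(\Omega_h^i)}$), combined with the ghost-penalty norm equivalence $\|q\|^2_{L^2(\Omega_h^e)}\le C\big(\|q\|^2_{L^2(\Omega_h^i)}+J_h(q,q)\big)$ obtained by iterating the per-face estimate along chains of bounded length (the paper simply cites \cite[Lemma 5.1]{MassingLarsonLoggRognes} for this step rather than reproving it). The only cosmetic difference is that you split the conclusion into two displayed estimates before recombining, which the paper does in a single chain of inequalities.
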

\begin{proof}
Fix some $q\in Q_h$. 
By \cite[Lemma 5.1]{MassingLarsonLoggRognes},
 for each pair of triangles $K_1$ and $K_2$ in $\mct^{ct}$ with $\p K_1\cap \p K_2=F\in \mcf^{ct,e}$
 we have
\begin{align*}
 \|q\|^2_{L^2(K_1)}\le C\Big(\|q\|^2_{L^2(K_2)}+\sum_{\ell=0}^{k-1}h_F^{2\ell+1}\int_{F}[\partial^\ell_nq]^2 \Big).
\end{align*}
Iterating this estimate, we conclude
\begin{equation}\label{aux473}
\|q\|_{L^2(\Omega)}^2 \le \|q\|_{L^2(\Omega_h^e)}^2 \le C\big(\|q\|_{L^2(\Omega_h^i)}^2 + J_h(q,q)\big).
\end{equation}
Combining this estimate with Theorem \ref{Thm1},
we conclude that there exists $\bv\in \bV_h$ with ${\rm supp}(\bv)\subset \Omega_h^i$
such that
\begin{align}\label{c1s1}
 \|q\|^2_{L^2(\Omega)}\le C\big(\|q\|^2_{L^2(\Omega_h^i)}+J_h(q,q)\big)\le C \theta^{-1} \Big(\frac{\int_{\Omega_h^i} (\Div \bv)q}{\|\bv\|_{H^1(\Omega_h^i)}}+J_h(q,q)\Big).
\end{align}
Because $\bv=0$ on $\Omega_h^\Gamma$, we have $j_h(\bv,\bv)=0$ and by an inverse estimate,
\begin{align*}
    \bm{{\rm j}}_h(\bv,\bv)
    &=\mathop{\sum_{F\in \mcf^{ct,\Gamma}}}_{F\subset \partial\Omega_h^i}\sum_{\ell=1}^{k}h_F^{2\ell-1}\int_{F}[\partial^\ell_n\bv]^2\\
    &\le C \sum_{K\in \widetilde{\mathcal{T}}_h^{ct,\Gamma}\cap \mct^{ct,i}}\|\nabla \bv\|^2_{L^2(K)}\le C\|\bv\|^2_{H^1(\Omega_h^i)}.
\end{align*}
Thus we have $\|\bv\|_{\bV_{0,h}}\le C  \|\bv\|_{H^1(\Omega_h^i)}. $ Combining this with \eqref{c1s1}, we have \eqref{c1}.
\end{proof}

\subsection{A priori estimates for the finite element method}
In this section we derive a priori estimates of the finite element
method, thus showing that the discrete problem \eqref{fem} is well--posed.
The techniques to show these results are rather standard, but 
we show the proofs here for completeness.

\begin{lemma}\label{lem:aCCont}
There exists constants $C_a,C_0>0$ such that
\begin{alignat*}{2}
a_h(\bu,\bv)&\le C_a\|\bu\|_{\bV_h}\|\bv\|_{\bV_h}\quad &&\forall \bu,\bv\in \bV_h+\bH^{k+1}(\Omega_h^e),\\
 C_0\|\bv\|^2_{\bV_h}&\le a_h(\bv,\bv)\ &&\forall \bv\in \bV_h .
 \end{alignat*}
\end{lemma}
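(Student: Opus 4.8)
The plan is to treat continuity and coercivity separately, isolating the Nitsche consistency term $s_h$ as the only nontrivial contribution in each. Writing out $a_h(\bv,\bv)$ and comparing with the definition of $\|\cdot\|_{\bV_h}$ gives the clean identity
\[
a_h(\bv,\bv) = \|\bv\|_{\bV_h}^2 + s_h(\bv,\bv),
\]
so coercivity reduces entirely to absorbing $|s_h(\bv,\bv)|$ into $\|\bv\|_{\bV_h}^2$. Likewise, for continuity the $L^2$ gradient and divergence products, the ghost penalty $\bm{{\rm j}}_h$, and the boundary term $\eta j_h$ are each bounded directly by Cauchy--Schwarz against the matching terms of the norm, so only $s_h$ requires work.

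The key technical ingredient I would establish first is a cut-robust trace inequality for the gradient,
\[
\|\bn^\intercal\nabla\bv\|_{L^2(\Gamma)}^2 \le \frac{C}{h}\left(|\bv|_{H^1(\Omega)}^2 + \bm{{\rm j}}_h(\bv,\bv)\right) \qquad \forall\bv\in\bV_h,
\]
together with the elementary bound $\|\bv\|_{L^2(\Gamma)}^2\le C h\, j_h(\bv,\bv)$, which is immediate from the definition of $j_h$ and $h_K\approx h$. For the trace inequality I would proceed in three steps: (i) on each cut micro-element $K\in\mct^{ct,\Gamma}$ apply the standard trace inequality $\|w\|_{L^2(K_\Gamma)}^2\le C(h_K^{-1}\|w\|_{L^2(K)}^2 + h_K\|\nabla w\|_{L^2(K)}^2)$ to $w=\nabla\bv$, whose constant is independent of how $\Gamma$ cuts $K$ for $h\le h_0$ since $\Gamma$ is smooth, and remove the second term by an inverse estimate; (ii) sum over $\mct^{ct,\Gamma}$ to obtain $\|\nabla\bv\|_{L^2(\Gamma)}^2\le Ch^{-1}\|\nabla\bv\|_{L^2(\Omega_h^\Gamma)}^2$; and (iii) invoke a ghost-penalty transfer estimate $\|\nabla\bv\|_{L^2(\Omega_h^\Gamma)}^2 \le C(|\bv|_{H^1(\Omega)}^2 + \bm{{\rm j}}_h(\bv,\bv))$ to pass from the fictitious strip back into $\Omega$.

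Step (iii) is where I expect the main obstacle to lie. It is proved by iterating the face-wise polynomial estimate of \cite{MassingLarsonLoggRognes} (as already used for the pressure in Corollary \ref{corollary1}), applied componentwise to the degree-$(k-1)$ polynomial $\nabla\bv$, walking from each cut element through $\mcf^{ct,\Gamma}$ to an interior element contained in $\Omega$. The resulting penalty involves the jumps $[\partial_n^{m-1}\nabla\bv]$ rather than the pure normal jumps $[\partial_n^m\bv]$ appearing in $\bm{{\rm j}}_h$; the two are reconciled by noting that continuity of $\bv$ across faces gives $[\partial_n^{m-1}\partial_\tau\bv] = \partial_\tau[\partial_n^{m-1}\bv]$, so the tangential part of each jump is a surface derivative of a lower-order normal jump and, after a face inverse inequality, telescopes into the $\bm{{\rm j}}_h$ terms of order $m-1$. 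This is precisely why the pure normal-derivative ghost penalty suffices.

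With these estimates in hand, continuity follows by Cauchy--Schwarz on $s_h$: $\|\bn^\intercal\nabla\bu\|_{L^2(\Gamma)}\|\bv\|_{L^2(\Gamma)}\le C\eta^{-1/2}\|\bu\|_{\bV_h}\|\bv\|_{\bV_h}$ and symmetrically, yielding $C_a$ with a mild $\eta$-dependence; the smooth part of $\bV_h+\bH^{k+1}(\Omega_h^e)$ is handled with the standard, non-robust trace inequality. For coercivity I would apply Young's inequality $2|\int_\Gamma(\bn^\intercal\nabla\bv)\cdot\bv|\le \epsilon\|\bn^\intercal\nabla\bv\|_{L^2(\Gamma)}^2 + \epsilon^{-1}\|\bv\|_{L^2(\Gamma)}^2$ with $\epsilon\sim h$ chosen so that the first term absorbs $\tfrac12(|\bv|_{H^1(\Omega)}^2+\bm{{\rm j}}_h(\bv,\bv))$, leaving a remainder of order $j_h(\bv,\bv)$ dominated by $\eta j_h(\bv,\bv)$ once $\eta$ is taken sufficiently large, independently of $h$ and the cut. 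Collecting terms gives $a_h(\bv,\bv)\ge \tfrac12\|\bv\|_{\bV_h}^2$, i.e. $C_0=\tfrac12$.
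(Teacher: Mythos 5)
The paper does not actually prove this lemma---it defers to \cite{MassingLarsonLoggRognes,GuzmanMaxim18,BurmanHansbo12}---and your sketch reproduces exactly the standard argument found in those references: the identity $a_h(\bv,\bv)=\|\bv\|^2_{\bV_h}+s_h(\bv,\bv)$, the cut-robust inverse/trace estimate $\|\bn^\intercal\nabla\bv\|^2_{L^2(\Gamma)}\le Ch^{-1}\big(|\bv|^2_{H^1(\Omega)}+\bm{{\rm j}}_h(\bv,\bv)\big)$ obtained by transferring the piecewise polynomial $\nabla\bv$ from the cut strip back to $\Omega_h^i$ via the face-wise estimate of \cite{MassingLarsonLoggRognes} (with the tangential parts of the jumps absorbed into lower-order normal jumps by face inverse inequalities, as you correctly observe---this is indeed why the purely normal ghost penalty suffices), and finally Young's inequality with $\epsilon\sim h$ and $\eta$ sufficiently large. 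Your account is correct and, if anything, more explicit than the paper about the implicit requirement $\eta\ge\eta_0$ for coercivity. The one caveat: for the continuity bound with an argument in $\bH^{k+1}(\Omega_h^e)$ rather than $\bV_h$, the ``standard, non-robust trace inequality'' you invoke produces terms like $h^{1/2}|\bu|_{H^2(\Omega_h^\Gamma)}$ that are not controlled by $\|\bu\|_{\bV_h}$, so continuity as literally stated only closes in an augmented norm; this looseness is inherited from the lemma statement itself (and is standard in the cited literature), not introduced by your argument.
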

\begin{proof}
The proof of this result can be found in, e.g., \cite{MassingLarsonLoggRognes,GuzmanMaxim18,BurmanHansbo12}.
\end{proof}

\begin{theorem}\label{thm:WP}
Suppose that $(\bu_h,p_h)\in\bV_h\times Q_h$ satisfies \eqref{fem}. Then
\begin{align}\label{apriori}
\| p_h\|_{L^2(\Omega)}
\leq C(1+\gamma)^{\frac12}\|{\bm f}\|_{\bV_{h}'},\quad
\|\bu_h\|_{\bV_h}\leq C\|{\bm f}\|_{\bV_h'},
\end{align}
for some $C>0$ independent of $\gamma$, $h$, and the position of $\Gamma$ in the mesh.
Consequently, \eqref{fem} has a unique solution.
\end{theorem}
\begin{proof}
We set $\bv_h=\bu_h$ in the first equation in \eqref{fem}, and $q_h=p_h$ in the second equation of \eqref{fem}
and subtract the resulting expressions:
\[
a_h(\bu_h,\bu_h)+{\frac{1}{1+\gamma}}J_h(p_h,p_h)=({\bm f},\bu_h).
\]

By the coercivity of $a_h(.,.)$ stated in Lemma \ref{lem:aCCont} and the Cauchy-Schwarz inequality, we have 
\begin{align*}
C_0\|\bu_h\|^2_{\bV_h}+{\frac{1}{1+\gamma}}J_h(p_h,p_h)&\leq({\bm f},\bu_h)
\leq \|\bm f\|_{\bV_h'}\|\bu_h\|_{\bV_h},
\end{align*}
and so
\begin{align}\label{bound1}
\frac{C_0}{2} \|\bu_h\|^2_{\bV_h} + {\frac{1}{1+\gamma}} J_h (p_h,p_h) \le \frac{1}{2C_0} \|{\bm f}\|_{\bV_h'}^2.
\end{align}

By the inf-sup stability estimate \eqref{c1} and Remark \ref{rem:NormComment}, 
there exists $\bz\in\bV_h$ with $(1+\gamma)^{-\frac12}\|\bz\|_{\bV_{h}}\le \|\bz\|_{\bV_{0,h}}=\|p_h\|_{L^2(\Omega)}$ and
\begin{align*}
   \theta_* \|p_h\|_{L^2(\Omega)}^2
    &\leq b(\bz, p_h)+J_h^{1/2}(p_h,p_h)\|p_h\|_{L^2(\Omega)}\\
& =     
    (\bm f,\bz)-a_h(\bz,\bu_h)+J_h^{1/2}(p_h,p_h)\|p_h\|_{L^2(\Omega)}.
\end{align*}
By Lemma \ref{lem:aCCont} and the Cauchy-Schwarz inequality, we have
\begin{align*}
    \theta_*\|p_h\|_{L^2(\Omega)}^2
    &\le \big(\|{\bm f}\|_{\bV_{h}'} \|\bz\|_{\bV_{h}} + C_a \|\bz\|_{\bV_{h}} \|\bu_h\|_{\bV_{h}}\big)
    +J^{1/2}_h(p_h,p_h)\|p_h\|_{L^2(\Omega)}\\
    & \le  \big((1+\gamma)^{\frac12}(\|{\bm f}\|_{\bV_h'}  + C_a\|\bu_h\|_{\bV_{h}} )
    +J^{1/2}_h(p_h,p_h)\big)\|p_h\|_{L^2(\Omega)}.
\end{align*}
Dividing by $\|p_h\|_{L^2(\Omega)}$ and using \eqref{bound1},
we conclude
\begin{align*}
    \theta_*^2\|p_h\|_{L^2(\Omega)}^2
    & \le  3\big((1+\gamma)(\|{\bm f}\|^2_{\bV_{h}'}  + C^2_a \|\bu_h\|^2_{\bV_{h}})
    +J_h(p_h,p_h)\big)\\
    &\le 3(1+\gamma)\Big(1+ C_a^2 C_0^{-2} + \frac{1}{2C_0} \Big)\|{\bm f}\|_{\bV_{h}'}^2.
\end{align*}
This estimate and \eqref{bound1} yields the desired result \eqref{apriori}.
\end{proof}

\section{Convergence Analysis}\label{sec:Conv}
In this section we assume that the solution to the Stokes problem \eqref{eqn:Stokes1} is sufficiently
smooth, i.e., $\bu\in \bH^{k+2}(\Omega)$, $p\in H^{k+1}(\Omega)$, where we recall $k$ is the polynomial degree in the definition of finite element spaces. 
Without loss of generality, we assume that ${\rm dist}(\p S,\p\Omega) = O(1)$.

Because $\partial\Omega$ is Lipschitz
there exists an extension of $p$, which we also denote by $p$, such that $p\in H^{k+1}(S)$ and (cf.~\cite{Stein})
\begin{subequations}
\label{Ext}
\begin{equation}
\|p\|_{H^\ell(S)}\le C\|p\|_{H^\ell(\Omega)}\quad\text{ for $\ell=0,1,\ldots,k+1$}.
\end{equation}
An analogous extension of $\bu$ is done in the following manner.
First, write the velocity in terms of potential field
$\bu = \text{\bf curl}\bpsi$, where we agree to understand $\text{\bf curl}\bpsi$ for any space dimension $d$ as the exterior derivative of $(d-2)$-differential form $\bpsi$.
For $\bu\in \bH^{k+2}(\Omega)$, the form $\bpsi$ satisfies $\bpsi\in \bH^{k+3}(\Omega)$
and $\|\bpsi\|_{H^{\ell+1}(\Omega)}\le C \|\bu\|_{H^{\ell}(\Omega)}$ for $\ell=0,1,\ldots k+2$ \cite{GR,CostabelMcIntosh2010}.
We extend $\bpsi$ to $S$ in a way such that $\|\bpsi\|_{H^\ell(S)}\le C \|\bpsi\|_{H^\ell(\Omega)}$ for $\ell=0,1,\ldots,k+3$,
and let  $\omega$ be a smooth cut-off function with compact support in $S$ and $\omega\equiv 1$ in $\Omega$.
We then define the velocity extension as $\bu = \text{\bf curl} (\omega \bpsi)$, so that
$\bu$ is divergence--free, vanishes on $\p S$, and 
	\begin{equation}
		\|\bu\|_{H^{\ell}(S)}\le C 	\|\bu\|_{H^{\ell}(\Omega)}\quad \text{ for $\ell=0,1,\ldots,k+2$}.
\end{equation}
\end{subequations}

\begin{remark}[Consistency]
Standard arguments show that the method \eqref{fem} is consistent.
In particular, there holds
\begin{align}\label{eqn:Consistent}
\Big\{\begin{array}{ll}
a_h(\bu,\bv_h)+b(p,\bv_h)=(\bm f,\bv_h),\\
b(q_h,\bu)-{\frac{1}{1+\gamma}}J_h(p,q_h)=0
\end{array}
\end{align}
for all $\bv_h\in\bV_h$, $q_h\in Q_h$.
\end{remark}

The following lemma is a direct application of
\cite[Lemma~4.10]{ElliotRan}.
\begin{lemma}\label{lem:ElliottRan}
For $T\in \calT_h^e$, define 
$\displaystyle\omega_T = \mathop{\cup_{T'\in \calT_h^e}}_{\bar T\cap \bar T'\neq \emptyset} \bar T'$ to be the patch
of neighboring elements of $T$.  We further define the $O(h)$ strip around $\Gamma$:
\[
\omega_\Gamma = \bigcup_{T\in \calT_h^\Gamma}\omega_T.
\]
Then there holds
\[
\|v\|_{L^2(\omega_\Gamma)}\le C h^{\frac12} \|v\|_{H^1(S)}\qquad \forall v\in H^1(S).
\]
\end{lemma}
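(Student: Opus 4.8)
The plan is to dominate $\omega_\Gamma$ by a genuine tubular neighborhood of $\Gamma$ whose width scales like $h$, and then to apply a coarea/strip estimate on that neighborhood, which is exactly the content of the cited \cite[Lemma~4.10]{ElliotRan}. Accordingly, the real work is the geometric containment; the strip estimate itself is a direct invocation.

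First I would establish
\[
\omega_\Gamma\subset U_{Ch}, \qquad U_\delta:=\{\bx\in S:\ {\rm dist}(\bx,\Gamma)<\delta\}.
\]
By definition, each $T\in\calT_h^\Gamma$ satisfies ${\rm meas}_{d-1}(T\cap\Gamma)>0$, so $\bar T\cap\Gamma\neq\emptyset$ and hence every point of $T$ lies within $h_T$ of $\Gamma$. Any $T'\in\omega_T$ shares at least a vertex with $T$, so by the triangle inequality every point of $T'$ lies within $h_T+h_{T'}$ of $\Gamma$. Quasi-uniformity and shape-regularity give $h_T,h_{T'}\le Ch$, so every point of $\omega_\Gamma$ lies within $Ch$ of $\Gamma$, which proves the containment. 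In particular, since $\Gamma$ is a compact smooth hypersurface of finite $(d-1)$-measure, $|\omega_\Gamma|\le|U_{Ch}|\le Ch$.

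Next, I would choose $h_0$ small enough that $Ch_0$ is below the reach $\delta_0>0$ of the smooth boundary $\Gamma$. For $h\le h_0$ the signed-distance function provides Fermi/normal coordinates on $U_{Ch}$: the level sets $\Gamma_t=\{{\rm dist}(\cdot,\Gamma)=t\}$, $|t|<Ch$, are smooth hypersurfaces depending regularly on $t$, and since $|\nabla\,{\rm dist}|=1$ the coarea formula gives, with no extra Jacobian factor,
\[
\|v\|_{L^2(U_{Ch})}^2=\int_{-Ch}^{Ch}\Big(\int_{\Gamma_t}|v|^2\,d\sigma\Big)\,dt.
\]
Applying the trace theorem on each parallel hypersurface yields $\|v\|_{L^2(\Gamma_t)}^2\le C\|v\|_{H^1(S)}^2$ with $C$ independent of $t$, and integrating over $t\in(-Ch,Ch)$ together with the containment from the first step gives
\[
\|v\|_{L^2(\omega_\Gamma)}^2\le\|v\|_{L^2(U_{Ch})}^2\le 2Ch\cdot C\|v\|_{H^1(S)}^2,
\]
which is the claimed bound after taking square roots.

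The only genuinely delicate points will be the uniformity of the trace constant across the family $\{\Gamma_t\}$ and the validity of the Fermi-coordinate change of variables; both rest on $Ch<\delta_0$ (hence the threshold $h_0$) and the smoothness of $\Gamma$, and both are already packaged in \cite[Lemma~4.10]{ElliotRan}. Thus the substantive step here is really the verification that the discrete patch $\omega_\Gamma$ is contained in a tubular neighborhood of width $O(h)$; everything afterward is a direct application of that result.
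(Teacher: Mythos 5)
Your proposal is correct and follows essentially the same route as the paper, which simply delegates the entire statement to \cite[Lemma~4.10]{ElliotRan}: that cited lemma is precisely the tubular-neighborhood estimate $\|v\|_{L^2(U_\delta)}\le C\delta^{1/2}\|v\|_{H^1(S)}$ that you rederive via the coarea formula and a uniform trace inequality on the parallel hypersurfaces. The only ingredient not contained in the citation is the containment $\omega_\Gamma\subset U_{Ch}$, which you supply correctly using quasi-uniformity; this is exactly the step the paper leaves implicit.
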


We also require a trace inequality suitable
for the CutFEM discretization  (see, e.g., \cite{hansbo2002unfitted,GuzmanMaxim18}).
\begin{lemma}\label{lem:TraceonGamma}
For every  $K\in \mathcal{T}_h^{ct,\Gamma}$ it holds
\begin{equation}\label{trace}
\|v\|_{L^2(K_\Gamma)}\le C(h^{-\frac12}_K\|v\|_{L^2(K)}+h^{\frac12}_K\|\nabla v\|_{L^2(K)})\qquad\forall v\in H^1(K),
\end{equation}
with a constant $C$ independent of $v$, $T$,  how $\Gamma$ intersects $T$, and $h<h_1$ for some  fixed $h_1>0$.
\end{lemma}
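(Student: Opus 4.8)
The plan is to reduce the curved boundary trace on $K_\Gamma$ to controllable bulk and flat-face quantities on the full simplex $K$ by applying the divergence theorem to a carefully chosen \emph{constant} vector field. The only structural input required about the cut is a mild geometric regularity of $\bar K\cap\Gamma$, which holds uniformly once $h<h_1$ precisely because $\Gamma$ is smooth and $\mct$ is quasi-uniform.

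First I would fix the geometry. For $h_1$ small enough, since $\Gamma$ is a smooth hypersurface and ${\rm diam}(K)\approx h_K\lesssim h$, the piece $K_\Gamma=\bar K\cap\Gamma$ is a single connected cap whose unit normal $\bn$ varies by $O(h)$ across $K_\Gamma$, and $\Gamma$ splits $K$ into subdomains. Fix a point $\bx_0\in K_\Gamma$, set $\be=\bn(\bx_0)$, and let $K^\star\subset K$ be the subdomain having $\bn$ as its outward unit normal along $K_\Gamma$, so that $\be\cdot\bn\ge 1-Ch\ge \tfrac12$ on all of $K_\Gamma$ for $h_1$ small. The boundary of $K^\star$ then consists of $K_\Gamma$ together with portions of the (flat) faces of $K$.

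Next I would apply the divergence theorem to $v^2\be$ on $K^\star$ (first for smooth $v$, then by density for $v\in H^1(K)$, since $v^2\in W^{1,1}(K^\star)$ and $K^\star$ is Lipschitz). Because $\be$ is constant there is no curvature term, and one obtains
\[
\int_{K_\Gamma}v^2\,(\be\cdot\bn)\,ds = 2\int_{K^\star}v\,(\be\cdot\nab v)\,d\bx-\int_{\partial K^\star\setminus K_\Gamma}v^2\,(\be\cdot\bn)\,ds.
\]
The left-hand side is bounded below by $\tfrac12\|v\|_{L^2(K_\Gamma)}^2$. The volume term is handled by Cauchy--Schwarz and Young's inequality, giving $2\int_{K^\star}v(\be\cdot\nab v)\le h_K^{-1}\|v\|_{L^2(K)}^2+h_K\|\nab v\|_{L^2(K)}^2$. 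Each remaining face contribution is an integral over a subset of a flat face of $K$ with $|\be\cdot\bn|\le 1$, so it is dominated by the standard scaled trace inequality on a simplex, $\|v\|_{L^2(\partial K)}^2\le C(h_K^{-1}\|v\|_{L^2(K)}^2+h_K\|\nab v\|_{L^2(K)}^2)$. Collecting these bounds and taking square roots yields \eqref{trace}.

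The point I would emphasize is uniformity in the cut: every term on the right is estimated over the full simplex $K$ (or its full faces), never over the cut pieces, so no constant degenerates as $\Gamma$ approaches a vertex or nearly grazes a face. The main obstacle is therefore the geometric step--verifying that for $h<h_1$ the set $K_\Gamma$ is a single smooth cap with nearly constant normal, so that one fixed direction $\be$ achieves $\be\cdot\bn\ge c>0$ on all of $K_\Gamma$ independently of how $\Gamma$ intersects $K$. This is exactly where smoothness of $\Gamma$ and quasi-uniformity of $\mct$ enter, and it is the content that renders the estimate a direct consequence of the cited CutFEM trace results.
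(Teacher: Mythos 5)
Your proof is correct, but note that the paper does not actually prove Lemma~\ref{lem:TraceonGamma}: it quotes the estimate and defers the proof to the literature (\cite{hansbo2002unfitted,GuzmanMaxim18}). Your divergence-theorem argument is a correct, self-contained rendition of the standard proof behind those citations; Hansbo--Hansbo argue instead via a local graph representation of $\Gamma$ over a face of $K$ and a one-dimensional fundamental-theorem-of-calculus estimate in the transversal direction, which is the same mechanism as integrating $\nabla\cdot(v^2\be)$ over the sublevel set, so the two routes buy the same thing: every term on the right is measured on the full simplex, hence uniformity in the cut. Two small points are worth tightening. First, you do not need (and in general do not have) that $K_\Gamma$ is a single connected cap --- when $\Gamma$ is nearly tangent to a face it may meet $K$ in several pieces; all your argument uses is that the unit normal of $\Gamma$ varies by $O(h_K)$ over the $h_K$-neighborhood of any point of $K_\Gamma$, so one fixed direction $\be$ satisfies $\be\cdot\bn\ge\tfrac12$ on every component, and the Gauss--Green identity holds on the (possibly disconnected) set $K^\star$, which is of finite perimeter even where it fails to be Lipschitz at tangency points. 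Second, the density step should be phrased so as not to presuppose the trace: prove the inequality for $v\in C^1(\bar K)$, apply it to differences $v_j-v_k$ of an approximating sequence to see that the restrictions to $K_\Gamma$ are Cauchy in $L^2(K_\Gamma)$, and pass to the limit; this simultaneously defines the trace and yields \eqref{trace}. With these caveats the constant depends only on shape regularity and a curvature bound for $\Gamma$ (entering through $h_1$), exactly as the lemma requires.
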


Consider  the finite element subspace of pointwise divergence-free functions:
\[
\bZ_h=\{\bw_h\in\bV_h\,:\,\Div \bw_h=0~\text{in}~\Omega_h^e\}.
\]
This subspace enjoys full approximation properties in the sense of the following lemma.  
\begin{lemma}\label{lem:LocalKernelApprox} 
For $\bu$, the divergence--free extension of the solution to \eqref{eqn:Stokes}, it holds
	\begin{equation}\label{ApproxProp1}
		\inf_{\bw_h\in\bZ_h}\|\bu-\bw_h\|_{H^1(T)}\le C h_T^k|\bu|_{H^{k+1}(\omega_T)}\qquad \forall T\in \calT_h^e.
	\end{equation}
	  Consequently, if $\bu\in \bH^{k+2}(\Omega)$,
\begin{align*}
\inf_{\bw_h\in \bZ_h} \|\bu-\bw_h\|_{\bV_h} \le C \big(h^k\|\bu\|_{H^{k+1}(\Omega)} + \eta^{\frac12} h^{k+\frac12} \|\bu\|_{H^{k+2}(\Omega)}\big).
\end{align*}
\end{lemma}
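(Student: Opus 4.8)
The plan is to exhibit a single divergence--free field $\bw_h\in\bZ_h$ that realizes both estimates, built by interpolating a potential of $\bu$ into a smooth finite element space and taking its curl, and then to bound each of the four contributions to $\|\bu-\bw_h\|_{\bV_h}$ separately.

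\emph{Construction and the local estimate \eqref{ApproxProp1}.} I would invoke the discrete Stokes complex attached to the Scott--Vogelius pair on the Clough--Tocher/Alfeld split (cf.\ \cite{FGN20,Guz2,ArnoldQiu92}): there is a finite element potential space $\Sigma_h$ on $\mct^{ct,e}$ -- the $C^1$ Clough--Tocher space of degree $k+1$ when $d=2$, and the corresponding space of the Alfeld Stokes complex when $d=3$ -- whose curl is precisely the pointwise divergence--free subspace of $\bV_h$, together with a quasi--interpolation $I_h$ satisfying the local bound $\|\phi-I_h\phi\|_{H^2(T)}\le Ch_T^k|\phi|_{H^{k+2}(\omega_T)}$. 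Writing $\bu=\nab\times(\omega\bpsi)$ on $S$ as in \eqref{Ext} and setting $\bw_h:=\nab\times I_h(\omega\bpsi)$, continuity of the potential makes $\bw_h\in\bH^1(\Omega_h^e)$, and since a curl is divergence--free, $\Div\bw_h=0$ on $\Omega_h^e$; the divergence theorem then gives $\int_{\p\Omega_h^i}\bw_h\cdot\bn=\int_{\Omega_h^i}\Div\bw_h=0$, so $\bw_h\in\bZ_h$. Estimate \eqref{ApproxProp1} follows since $\|\bu-\bw_h\|_{H^1(T)}=\|\nab\times(\omega\bpsi-I_h(\omega\bpsi))\|_{H^1(T)}\le C\|\omega\bpsi-I_h(\omega\bpsi)\|_{H^2(T)}\le Ch_T^k|\omega\bpsi|_{H^{k+2}(\omega_T)}$, once the potential seminorm is controlled locally by $|\bu|_{H^{k+1}(\omega_T)}$.

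Controlling that potential seminorm \emph{locally} is where I expect the real work to lie, since the regularity estimate $\|\bpsi\|_{H^{\ell+1}(\Omega)}\le C\|\bu\|_{H^\ell(\Omega)}$ is elliptic and global, and the gauge freedom in the potential obstructs a naive localization to $\omega_T$. When $d=2$ this difficulty disappears: $\bpsi$ is the scalar stream function, $\nab\bpsi$ is the rotation of $\bu$, so $|\bpsi|_{H^{k+2}(\omega_T)}=|\bu|_{H^{k+1}(\omega_T)}$ holds pointwise (after fixing the stream function of the $S$--extension). When $d=3$ I would instead use the \emph{commuting} quasi--interpolation of the discrete Stokes complex, which yields a divergence--free interpolant with the local property \eqref{ApproxProp1} directly, without forming the potential explicitly; this is the content cited from \cite{FGN20,Guz2}.

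\emph{The global estimate.} I would sum \eqref{ApproxProp1} and treat the four pieces of $\|\cdot\|_{\bV_h}$ in turn. Finite overlap of the patches and the extension stability \eqref{Ext} give $|\bu-\bw_h|^2_{H^1(\Omega)}\le\sum_{T\in\mct^e}\|\bu-\bw_h\|^2_{H^1(T)}\le Ch^{2k}\|\bu\|^2_{H^{k+1}(\Omega)}$. The grad--div term drops out entirely, because $\bu$ and $\bw_h$ are both divergence--free on $\Omega$, so $\Div(\bu-\bw_h)=0$ there and the $\gamma$--contribution vanishes; this is exactly what renders the final bound independent of $\gamma$. For the ghost--penalty term $\bm{{\rm j}}_h(\bu-\bw_h,\bu-\bw_h)$ I would bound each face jump by elementwise trace and inverse inequalities on the micro--elements adjacent to $F\in\mcf^{ct,\Gamma}$ and insert the full--range local bounds $\|\bu-\bw_h\|_{H^m(T)}\le Ch_T^{k+1-m}|\bu|_{H^{k+1}(\omega_T)}$, $0\le m\le k+1$; both the $h_F^{2\ell-2}$ and $h_F^{2\ell}$ pieces collapse to order $h^{2k}|\bu|^2_{H^{k+1}(\omega_T)}$, so $\bm{{\rm j}}_h^{1/2}\le Ch^k\|\bu\|_{H^{k+1}(\Omega)}$. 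Finally, for the Nitsche term I would apply Lemma \ref{lem:TraceonGamma} to obtain $\eta j_h(\bu-\bw_h,\bu-\bw_h)\le C\eta\sum_{K\in\mct^{ct,\Gamma}}(h_K^{-2}\|\bu-\bw_h\|^2_{L^2(K)}+\|\nab(\bu-\bw_h)\|^2_{L^2(K)})\le C\eta h^{2k}\|\bu\|^2_{H^{k+1}(\omega_\Gamma)}$, and then convert the strip seminorm with Lemma \ref{lem:ElliottRan} applied to each derivative $D^\alpha\bu$, $|\alpha|\le k+1$, giving $\|\bu\|_{H^{k+1}(\omega_\Gamma)}\le Ch^{1/2}\|\bu\|_{H^{k+2}(\Omega)}$ and hence $\eta^{1/2}j_h^{1/2}\le C\eta^{1/2}h^{k+1/2}\|\bu\|_{H^{k+2}(\Omega)}$. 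Collecting the four bounds yields the claimed estimate.
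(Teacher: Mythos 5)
Your route to \eqref{ApproxProp1} is genuinely different from the paper's. You build the divergence--free interpolant as the curl of a quasi--interpolant of the potential $\omega\bpsi$ in the smooth (potential) space of the discrete Stokes complex, whereas the paper's appendix constructs a Fortin--type operator acting directly on velocities, $\bPi_h=\bI_h+\bPi_0(\bm 1-\bI_h)$, where $\bI_h$ is the Scott--Zhang interpolant and $\bPi_{0,T}$ is a local correction uniquely determined by moments against $\bcurl\,\mathring{\bM}_{k+1}(T^{ct})$ and $\mathring{\pol}_{k-1}(T^{ct})$, with $H^1$--stability proved by a scaling argument; one then checks $\int(\Div\bPi_h\bv)q=\int(\Div\bv)q$ for all discrete pressures $q$, so $\bPi_h\bu\in\bZ_h$ when $\Div\bu=0$. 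The second half of your argument (the grad--div term vanishing because $\Div(\bu-\bw_h)=0$, the trace inequality of Lemma~\ref{lem:TraceonGamma} for $j_h$, and Lemma~\ref{lem:ElliottRan} to trade a power of $h^{1/2}$ for one extra derivative of $\bu$ on the strip $\omega_\Gamma$) coincides with the paper's treatment and is correct.

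The gap is in the first half, and it is concentrated in $d=3$. In $2$D your argument is essentially complete: the $C^1$ Clough--Tocher space of degree $k+1$ admits local quasi--interpolants with the $H^2$ bound you quote, its curl lands in $\bV_h$ (global $C^1$ continuity of the potential gives $H^1$ conformity of the curl), and the identity $|\omega\bpsi|_{H^{k+2}(\omega_T)}=|\bu|_{H^{k+1}(\omega_T)}$ localizes the potential seminorm, as you note. In $3$D, however, you discharge the entire difficulty onto ``the commuting quasi--interpolation of the discrete Stokes complex\ldots the content cited from \cite{FGN20,Guz2}.'' Those references establish exactness of the Alfeld--split complexes and the associated degrees of freedom; they do not supply an off--the--shelf, locally bounded, locally accurate commuting quasi--interpolant onto the supersmooth potential space (constructing such operators for $C^1$--type spaces in $3$D is a nontrivial task in its own right, and the gauge freedom of the vector potential further obstructs the localization $|\cdot|_{H^{k+2}(\omega_T)}\lesssim|\bu|_{H^{k+1}(\omega_T)}$ that your bound needs). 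This missing tool is precisely why the paper's appendix takes the Fortin--operator detour: the local exactness result \cite[Lemma 4.16]{FGN20} is used only to define the correction $\bPi_{0,T}$ elementwise, and all approximation power is imported from the standard Scott--Zhang operator, so no interpolation theory for smooth splines is required. To close your argument in $3$D you would either have to construct and analyze the commuting potential interpolant yourself, or switch to the velocity--level Fortin construction.
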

\begin{proof}
The proof of \eqref{ApproxProp1} is given in the appendix.

To bound $\|\bu-\bw_h\|_{\bV_h}$ we use the approximation results from
\eqref{ApproxProp1} and note that $\Div(\bu-\bw_h)=0$ in $\Omega_h^e$. 
The penalty part of $ \|\bu-\bw_h\|_{\bV_h}$ is estimated using Lemma \ref{lem:TraceonGamma} as follows:
\begin{equation}\label{aux674}
\begin{split}
	j_h(\bu-\bw_h,\bu-\bw_h)& =   \sum_{K\in \calT_h^{ct,\Gamma}} h_K^{-1} \|\bu-\bw_h\|^2_{L^2(K_\Gamma)}\\ 
	&\le C \sum_{K\in \calT_h^{ct,\Gamma}} \big( h_K^{-2} \|\bu-\bw_h\|_{L^2(K)}^2 + \|\nab (\bu-\bw_h)\|_{L^2(K)}^2\big)\\
	&\le C h^{2k}\|\bu\|_{H^{k+1}(\omega_\Gamma)}^2
	\le C h^{2k+1}\|\bu\|_{H^{k+2}(\Omega)}^2,
\end{split}
\end{equation}
where for the last inequality we used \eqref{Ext} and Lemma \ref{lem:ElliottRan}.
This yields the bound $\|\bu-\bw_h\|_{\bV_h}\le C\big(h^{k}\|\bu\|_{H^{k+1}(\Omega)}
+\eta^{\frac12} h^{k+\frac12}\|\bu\|_{H^{k+2}(\Omega)}\big)$.
\end{proof}

\begin{theorem}\label{thm:MainError} The following error estimate holds
\begin{multline}\label{VelErr}
	\|\bu-\bu_h\|_{\bV_h} +  (1+\gamma)^{-\frac12}\|p- p_h\|_{L^2(\Omega)} 
	\le C\Bigg(h^{k} \|\bu\|_{H^{k+1}(\Omega)}
	+(1+\gamma^{\frac12}+ \eta^{\frac12}) h^{k+\frac12}\|\bu\|_{H^{k+2}(\Omega)}\\
	+ (\eta^{-\frac12}+{(1+\gamma)^{-\frac12}}) h^{k+\frac12}\|p\|_{H^{k+1}(\Omega)}+ 
	(1+\gamma)^{-\frac12} h^{k}\|p\|_{H^{k}(\Omega)}\Bigg).
\end{multline}
\end{theorem}
\begin{proof}	
To show the error bounds, we start with a standard argument.  
Let $\bw_h\in \bZ_h$ be a function in the discrete kernel satisfying estimate \eqref{ApproxProp1}.
Setting $\be_I=\bu_h-\bw_h\in\bV_h$ we have, thanks to the coercivity result in Lemma~\ref{lem:aCCont}:
\begin{equation}\label{aux609}
		C_0\|\be_I\|^2_{\bV_h}\le a_h(\be_I,\be_I). 
\end{equation}

Denote by $\widehat p_h\in Q_h$ the $L^2$-projection of $p$ onto $Q_h$,
and set $q_I=p_h-\widehat p_h$.   It follows from \eqref{c1} that  there exists  ${\bv\in \bV_h}$ with ${\rm supp}(\bv)\subset \Omega_h^i$ 
such that 
\begin{equation}\label{aux638}
		C_1\|q_I\|_{L^2(\Omega)}^2\le b(\bv,q_I)+C_1^{-1}J_h(q_I,q_I),\quad \text{with}~(1+\gamma)^{-\frac12} \|\bv\|_{\bV_h}\le \|\bv\|_{\bV_{0,h}}=\|q_I\|_{L^2(\Omega)},
\end{equation}
where $C_1 = \frac{\theta_*}{2}$, and $\theta_*$ is the inf-sup constant given in Corollary \ref{corollary1}.

From \eqref{aux609}, \eqref{aux638}, \eqref{fem}, and the consistency identity \eqref{eqn:Consistent}, we conclude that for any $\alpha\ge0$ it holds
\begin{align}
\label{aux654}
&C_0\|\be_I\|^2_{\bV_h}+C_1\alpha\|q_I\|_{L^2(\Omega)}^2 +\big((1+\gamma)^{-1}-\alpha C_1^{-1}\big)J_h(q_I,q_I)\\
&\nonumber\qquad \le a_h(\be_I,\be_I) + b(\alpha \bv,q_I)+ (1+\gamma)^{-1} J_h(q_I,q_I)\\
&\nonumber\qquad= a_h(\be_I,\be_I+\alpha \bv) + b(\be_I+\alpha \bv,q_I)  - b(\be_I,q_I) + (1+\gamma)^{-1} J_h(q_I,q_I) -a_h(\be_I,\alpha \bv)\\
&\nonumber\qquad= a_h(\bu-\bw_h,\be_I+\alpha \bv) + b(\be_I+\alpha \bv,p-\widehat p_h)  - b(\bu-\bw_h,q_I) \\
&\nonumber\qquad\qquad+ (1+\gamma)^{-1} J_h(p-\widehat p_h,q_I) -a_h(\be_I,\alpha \bv)\\
 &\nonumber\qquad=:I_1+I_2+I_3+I_4+I_5.
\end{align}	
We now estimate the right-hand side of \eqref{aux654} term-by-term.

Using the continuity  result in Lemma~\ref{lem:aCCont} and the approximation results in Lemma \ref{lem:LocalKernelApprox}, we bound  
\begin{align}\label{aux609ABC}
I_1
 &\le C_a\|\be_I+\alpha\bv\|_{\bV_h}\|\bu-\bw_h\|_{\bV_h}\\
 &\nonumber\le C\|\be_I+\alpha\bv\|_{\bV_h}\left(h^{k}\|\bu\|_{H^{k+1}(\Omega)}+\eta^{\frac12} h^{k+\frac12}\|\bu\|_{H^{k+2}(\Omega)}\right)\\
 &\nonumber\le C \big(\|\be_I\|_{\bV_h} + \alpha (1+\gamma)^{\frac12} \|q_I\|_{L^2(\Omega)}\big)\left(h^{k}\|\bu\|_{H^{k+1}(\Omega)}+\eta^{\frac12} h^{k+\frac12}\|\bu\|_{H^{k+2}(\Omega)}\right),
\end{align}
where we used \eqref{aux638} in the last inequality.



We now estimate the second term in the right-hand side of \eqref{aux654} in two steps. 
First, using approximation properties of the $L^2$-projection, we get
\begin{equation}\label{aux619}
(\widehat p_h-p,\Div\be_I )\le (1+\gamma)^{-\frac12}{\|\widehat p_h-p\|_{L^2(\Omega)}}  \|\be_I\|_{\bV_h}\le (1+\gamma)^{-\frac12}h^{k}\|p\|_{H^{k}(\Omega)}  \|\be_I\|_{\bV_h}.
\end{equation}
Likewise,
\begin{equation}\label{aux619b}
	\begin{split}
		(\widehat p_h-p,\alpha \Div \bv )\le C \alpha
		h^{k}\|p\|_{H^{k}(\Omega)}  \|q_I\|_{L^2(\Omega)}. 
	\end{split}
\end{equation}

We apply the trace inequality \eqref{trace} and standard approximation properties
of the $L^2$-projection to estimate the boundary integral in $b(\be_I+\alpha\bv,p-\widehat p_h)$, noting that $\bv=0$ on $\Gamma$: 
\begin{equation}\label{aux635}
\begin{split}
\int_{\Gamma}(\widehat p_h-p)(\be_I+\alpha\bv)\cdot \bn
&\le \sum_{K\in \calT_h^{ct,\Gamma}} \|\widehat p_h-p\|_{L^2(K_\Gamma)} \|\be_I\|_{L^2(K_\Gamma)}\\
&\le \Big(\sum_{K\in \calT_h^{ct,\Gamma}}  \eta^{-1} h_K \|\widehat p_h-p\|^2_{L^2(K_\Gamma)}\Big)^{1/2} \Big(\sum_{K\in \calT_h^{ct,\Gamma}} \eta h_K^{-1} \|\be_I\|_{L^2(K_\Gamma)}\Big)^{1/2}\\
&\le C \eta^{-\frac12} h^{k}\|p\|_{H^k(\omega_\Gamma)} \|\be_I\|_{\bV_h} \le C  \eta^{-\frac12} h^{k+\frac12}\|p\|_{H^{k+1}(\Omega)} \|\be_I\|_{\bV_h},
\end{split}
\end{equation}
where we used Lemma \ref{lem:ElliottRan} in the last inequality.
Summing \eqref{aux619}--\eqref{aux635} we obtain
\begin{equation}\label{aux715}
I_2\le C\Big(
{\alpha h^{k}\|p\|_{H^{k}(\Omega)} } \|q_I\|_{L^2(\Omega)}+ 
{\left((1+\gamma)^{-\frac12}  h^k \|p\|_{H^k(\Omega)}
+\eta^{-\frac12} h^{k+\frac12}\|p\|_{H^{k+1}(\Omega)} \right) \|\be_I\|_{\bV_h}}\Big).
\end{equation}


To estimate $I_3$, we first note that, due to  \eqref{trace}, finite element inverse inequalities, and \eqref{aux473},
there holds
\begin{align*}
 \sum_{K\in \calT_h^{ct,\Gamma}} h_K\|q_I\|_{L^2(K_\Gamma)}^2 \le C  \sum_{K\in \calT_h^{ct,\Gamma}}\|q_I\|_{L^2(K)}^2\le C \big(\|q\|_{L^2(\Omega)}^2 + J_h(q_I,q_I)\big).
\end{align*}
Therefore, thanks to $\Div (\bu-\bw_h)=0$ and the estimate  \eqref{aux674}, we have
\begin{align*}
I_3
&\le \Big( \sum_{K\in \calT_h^{ct,\Gamma}} h_K\|q_I\|_{L^2(K_\Gamma)}^2\Big)^{1/2}\Big( \sum_{K\in \calT_h^{ct,\Gamma}}  h_K^{-1}\|\bu-\bw_h\|_{L^2(K_\Gamma)}^2\Big)^{1/2}\\
& \le C (\|q_I\|^2_{L^2(\Omega)}+J_h(q_I,q_I))^\frac12 h^{k+\frac12}\|\bu\|_{H^{k+2}(\Omega)}.
\end{align*}

We proceed with estimating  terms in the right-hand side of \eqref{aux654}. For the fourth term we get, using
the trace inequality \eqref{trace}, approximation properties of the $L^2$-projection, and Lemma \ref{lem:ElliottRan},
\begin{equation}\label{aux731}
I_4 \le  (1+\gamma)^{-1}	J_h^\frac12(p-\widehat p_h,p-\widehat p_h)J^\frac12_h(q_I,q_I)\le C {h^{k+\frac12}} (1+\gamma)^{-1} \|p\|_{H^{k+1}(\Omega)}J^\frac12_h(q_I,q_I).
\end{equation}
	
For the last term in \eqref{aux654} we have ({using \eqref{aux638})}
\begin{equation}\label{aux736}
I_5
\le \frac{C_0}{4}\|\be_I\|_{\bV_h}^2 + \frac{C_a^2 \alpha^2}{C_0}\|\bv\|_{\bV_h}^2
\le \frac{C_0}{4}\|\be_I\|_{\bV_h}^2 + \frac{C_a^2 \alpha^2(1+\gamma)}{C_0}\|q_I\|^2_{L^2(\Omega)}.
\end{equation}

We  apply the estimates \eqref{aux609ABC}--\eqref{aux736}
to \eqref{aux654} getting
\begin{align*}
&C_0\|\be_I\|^2_{\bV_h}+C_1\alpha\|q_I\|_{L^2(\Omega)}^2 +((1+\gamma)^{-1}-\alpha C_1^{-1})J_h(q_I,q_I)\\
&\le C\Bigg( \big(\|\be_I\|_{\bV_h}  + \alpha(1+\gamma)^{\frac12}\|q_I\|_{L^2(\Omega)}\big) \big(h^k\|\bu\|_{H^{k+1}(\Omega)} + \eta^{\frac12} h^{k+\frac12}\|\bu\|_{H^{k+2}(\Omega)}\big)\\
&\qquad+ \alpha h^{k} \|q_I\|_{L^2(\Omega)} \|p\|_{H^{k}(\Omega)}
 +  \Big((1+\gamma)^{-\frac12} h^k \|p\|_{H^k(\Omega)} + \eta^{-\frac12} h^{k+\frac12} \|p\|_{H^{k+1}(\Omega)}\Big)\|\be_I\|_{\bV_h}\\
&\qquad\qquad+ (\|q_I\|_{L^2(\Omega)} + J^\frac12_h(q_I,q_I))h^{k+\frac12}(\|\bu\|_{H^{k+2}(\Omega)}+(1+\gamma)^{-1} \|p\|_{H^{k+1}(\Omega)})\Bigg)\\
&\qquad\qquad\qquad +\frac{C_0}{4}\|\be_I\|_{\bV_h}^2 + \frac{C_a^2\alpha^2(1+\gamma)}{C_0}\|q_I\|^2_{L^2(\Omega)}.
\end{align*}	

We apply the Cauchy-Schwarz inequality several times and rearrange terms to obtain
\begin{align*}
&C_0\|\be_I\|^2_{\bV_h}+\left(C_1\alpha-C {\alpha^2 (1+\gamma)}\right)\|q_I\|_{L^2(\Omega)}^2 +((1+\gamma)^{-1}-\alpha C_1^{-1})J_h(q_I,q_I)\\
&\qquad\le C\Bigg(  
\Big(h^{2k} \|\bu\|_{H^{k+1}(\Omega)}^2 + \eta h^{2k+1}\|\bu\|_{H^{k+2}(\Omega)}^2\Big)
+ (\alpha+1)(1+\gamma)^{-1}h^{2k}\|p\|_{H^k(\Omega)}^2\\
&\qquad\qquad
 + \eta^{-1} h^{2k+1}\|p\|_{H^{k+1}(\Omega)}^2
+ \alpha^{-1}h^{2k+1}\big(\|\bu\|_{H^{k+2}(\Omega)}^2 + {(1+\gamma)^{-2}}\|p\|_{H^{k+1}(\Omega)}^2\big)\Bigg).
\end{align*}	

We now take $\alpha = \widetilde C (1+\gamma)^{-1}$, with $\widetilde C>0$ sufficiently small to obtain
\begin{align*}
&C_0\|\be_I\|^2_{\bV_h}+C(1+\gamma)^{-1}\big(\|q_I\|_{L^2(\Omega)}^2 +J_h(q_I,q_I)\big)\\
%
&\qquad \le C \Bigg( h^{2k} \|\bu\|_{H^{k+1}(\Omega)}^2+ (1+\gamma)^{-1} h^{2k} \|p\|_{H^k(\Omega)}^2 + \big(1+ \eta +\gamma\big)h^{2k+1}\|\bu\|_{H^{k+2}(\Omega)}^2\\
&\qquad\qquad + (\eta^{-1}+{(1+\gamma)^{-1}}) h^{2k+1} \|p\|_{H^{k+1}(\Omega)}^2\Bigg).
\end{align*}	
Finally, we apply the triangle inequality, the divergence-free property of $\bu$ and $\bw_h$,
and approximation properties \eqref{ApproxProp1} to obtain the error estimate  \eqref{VelErr}.

\end{proof}

\begin{remark}
	The pressure dependence in velocity error \eqref{VelErr}
	arises from the violation of mass conservation in the boundary strip and the penalty treatment of the boundary condition.  The violation of the divergence-free constraint in a boundary strip can be partially mitigated by taking grad-div parameter $\gamma=O(h^{-1})$
	and Nitsche parameter $\eta = O(h^{-1})$, which seem to be the optimal choice with respect to the error analysis in the energy norm. This can be contrasted to $\gamma=O(1)$ for the Taylor-Hood element.  
\end{remark}

\section{Extensions to Powell-Sabin Splits}\label{sec:PS}
In this section, we extend
the method and analysis in the previous sections
to the Scott-Vogelius finite element pair
on two-dimensional Powell-Sabin splits.
For brevity, we concentrate
on the lowest-order pair
which has recently been shown to be inf-sup stable in a non-cutFEM setting
in \cite{FabEtAl22}.

As in the previous sections, we let
$\calT_h$ be a simplicial mesh of $S$ (with $\bar \Omega \subset S$),
and let $\calT_h^i$ and $\Omega_h^i$ be the set of interior simplices and interior domain, respectively,
defined by \eqref{eqn:mctiDef}.  Let $\calT_h^\Gamma$
be the sets of simplicies that cut through the interface,  
$\calT_h^{e} = \calT_h^i\cup \calT_h^\Gamma$,
and $\calF_h^\Gamma$ to be the set of edges in $\calT_h^\Gamma$
that do not lie on $\p\Omega_h^e$.

For each $T\in \calT_h$,
we denote by $z_T$ the incenter of $T$.
The Powell-Sabin refinement of $\calT_h^\circ$ ($\circ\in \{i,e,\Gamma\}$),
denoted by  $\calT_h^{ps,\circ}$,
is constructed in three steps as follows:  (1) similar to the Clough-Tocher refinement, we connect
the incenter $z_T$ of each $T\in \calT_h^\circ$ with its vertices; (2) for each interior edge $e$ of $\calT_h^\circ$, 
with $e = \p T_1\cap \p T_2$, we add a vertex
(on $e$) by connecting the incenters $z_{T_1}$ and $z_{T_2}$ by a straight line; 
(3) for each boundary edge $e$ with $e\subset \p T$, we add a vertex by connecting
the incenter $z_T$ with the edge midpoint of $e$.

Thus, we see that the Powell-Sabin refinement splits
each triangle $T\in \calT_h^\circ$ into six sub-triangles.
Further, this refinement  produces many singular vertices, i.e.,
vertices that fall on exactly two straight lines in the mesh.  These vertices
are exactly those produced in steps (2) and (3) of the above procedure.

Let $\calS_h^{\circ,I}$
and $\calS_h^{\circ,B}$ 
be the sets of interior and boundary singular vertices in $\calT_h^{ps,\circ}$,
and set $\calS_h^\circ = \calS_h^{\circ,I}\cup \calS_h^{\circ,B}$ the set
of singular of vertices of $\calT_h^{ps,\circ}$.
For $z\in \calS_h^{\circ,I}$, we denote by $\calT_z\subset \calT_h^{ps,\circ}$
the set of four triangles
that have $z$ as a vertex. We write $\calT_z = \{K_z^{(1)},K_z^{(2)},K_z^{(3)},K_z^{(4)}\}$, labeled
such that $K_z^{(j)}$ and $K_z^{(j+1)}$ have a common edge; see Figure \ref{fig:PSSing}.
For a boundary singular vertex $z\in \calS_h^{\circ,B}$
we let $\calT_z = \{K_z^{(1)},K_z^{(2)}\}\subset \calT_h^{ps,\circ}$,
the set of two triangles that have $z$ as a vertex.

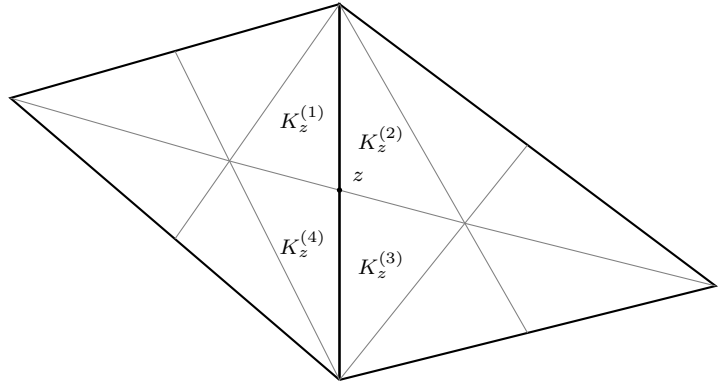
\begin{figure}[h]
\centering
\begin{tikzpicture}[scale=1.25]
\draw[-,thick](0,0)--(0,4)--(-3.5,3)--(0,0);
\draw[-,thick](0,0)--(0,4)--(4,1)--(0,0);

\node (z1) at (-1.17,2.33) {}; 
\node (z2) at (1.333333,1.666667) {}; 
\node (z) at (0,2.066667) {}; 

\draw[-,gray](0,0)-- (-1.17,2.33);
\draw[-,gray](0,4)-- (-1.17,2.33);
\draw[-,gray](-3.5,3)-- (-1.17,2.33);

\draw[-,gray](0,0)-- (1.333333,1.666667) ;
\draw[-,gray](0,4)-- (1.333333,1.666667) ;
\draw[-,gray](4,1)-- (1.333333,1.666667) ;

\draw[-,gray](-1.17,2.33)--(1.333333,1.666667);

\draw[-,gray](1.333333,1.666667)--(2,2.5);
\draw[-,gray](1.333333,1.666667)--(2,0.5);
\draw[-,gray](-1.17,2.33)--(-1.75,1.5);
\draw[-,gray](-1.17,2.33)--(-1.75,3.5);

\node[inner sep = 0pt,minimum size=2pt,fill=black!100,circle] (n2) at (0,2.019973483)  {};
\draw (0.2,2.166667) node {\footnotesize $z$};

\draw (-0.390000000000000,2.78332449433333) node {\footnotesize $K_z^{(1)}$};
\draw (0.444444333333333,2.56221349433333) node {\footnotesize $K_z^{(2)}$};
\draw (0.444444333333333,1.22888016100000) node {\footnotesize $K_z^{(3)}$};
\draw (-0.390000000000000,1.44999116100000) node {\footnotesize $K_z^{(4)}$};

\end{tikzpicture}
\caption{\label{fig:PSSing}Example of an interior singular vertex in the Powell-Sabin refinement.}
\end{figure}

The following result states a weak continuity property
of the divergence acting on piecewise smooth functions at singular
vertices.  The proof can be found in, e.g., \cite{SV1}.
\begin{proposition}\label{prop:DivOnPS}
For a piecewise smooth function
$q$ with respect to $\calT_h^{ps,\circ}\ (\circ\in \{i,e\})$, define
\[
\theta_z(q) = 
\left\{
\begin{array}{ll}
(q_1-q_2+q_3-q_4)(z) & \text{if $z\in \calS_h^{\circ,I}$},\\
(q_1-q_2)(z) & \text{if $z\in \calS_h^{\circ,B}$},
\end{array}
\right.
\]
where $q_j = q|_{K_z^{(j)}}$.
Then there holds for all piecewise smooth $\bv\in \bH^1_0(\Omega_h^\circ)$,
\[
\theta_z(\Div \bv) = 0\qquad \forall z\in \calS_h^\circ.
\]
Moreover, there holds for all piecewise smooth $\bv\in \bH^1(\Omega_h^\circ)$,
\[
\theta_z(\Div \bv) = 0\qquad \forall z\in \calS_h^{\circ,I}.
\]
\end{proposition}

With an abuse of notation, for each $T\in \calT_h$, 
we set $T^{ct}$ to be the set of three triangles obtained by
connecting the vertices of $T$ with its incenter.
We also define $T^{ps}$ to be the local Powell-Sabin refinement of $T$, i.e.,
\[
T^{ps} = \{K\in \calT_h^{ps}:\ K\subset \bar T\}.
\]
For $T\in \calT_h$, let $z\in \calS_h$ be
a singular vertex with $z\in \bar T$.  Let $\{K_1,K_2\}\subset T^{ps}$
be the set of two triangles that have $z$ as a vertex, 
and let ${\bm \eta}_i$ be the outward unit normal of $\p K_i$
orthogonal to the common edge $\p K_1\cap \p K_2$; see Figure \ref{fig:PSonT}.
We define the jump of a piecewise smooth function $q$ on $T$
at $z$ as
\[
\jump{q}(z) = q|_{K_1}(z) {\bm \eta}_1+ q|_{K_2}(z) {\bm \eta}_2.
\]
Thus, we see that a piecewise smooth function $q$ satisfies
$\theta_z(q) = 0$ if and only if $[q](z)$
is single-valued.

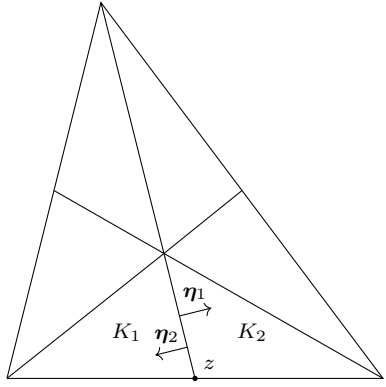
\begin{figure}[h]
\centering
\begin{tikzpicture}[scale=1.25]
\draw[-](0,0)--(4,0)--(1,4)--(0,0);
\node (z1) at (1.67,1.33) {}; 

\draw[-](0,0)--(1.67,1.33);
\draw[-](4,0)--(1.67,1.33);
\draw[-](1,4)--(1.67,1.33);
\draw[-](1.67,1.33)--(2,0);
\draw[-](1.67,1.33)--(2.5,2);
\draw[-](1.67,1.33)--(0.5,2);

\node[inner sep = 0pt,minimum size=2pt,fill=black!100,circle] (n2) at (2,0)  {};
\draw (2.15,0.15) node {\footnotesize $z$};

\draw (1.273333333,0.4933333333) node {\footnotesize $K_1$};
\draw (2.606666667,0.4933333333) node {\footnotesize $K_2$};

\draw[->](1.835000000,0.6650000000)--(2.174699586,0.7492863634);

\draw[->](1.917500000,0.3325)--(1.577800414,0.2482136365);

\draw (2,0.9) node {\footnotesize ${\bm \eta}_1$};
\draw (1.7,0.45) node {\footnotesize ${\bm \eta}_2$};

\end{tikzpicture}
\caption{\label{fig:PSonT}Depiction of a local Powell-Sabin triangulation $T^{ps}$.}
\end{figure}

\subsection{Finite element method on Powell-Sabin splits}
We define the finite element spaces
\begin{align*}
\bV_h^{ps} &= \{\bv\in \bpol_1^c(\calT_h^{ps,e}),\  \int_{\p \Omega_h^i} \bv\cdot \bn = 0\},\quad
Q_h^{ps}= \{q\in \pol_0(\calT_h^{ps,e}),\ \theta_z(q) = 0\ \forall z\in \calS_h^{e,I},\ q|_{\Omega_h^i} \in L^2_0(\Omega_h^i)\},
\end{align*}
and the corresponding spaces with respect to the interior mesh:
\begin{align*}
\bV_h^{ps,i} &= \mathring{\bpol}_1^c(\calT_h^{ps,i}),\qquad
Q_h^{ps,i} = \{q\in \mathring{\pol}_0(\calT_h^{ps,i}),\ \theta_z(q) = 0\ \forall z\in \calS^i_h\}.
\end{align*}
\begin{remark}
There holds $\Div \bV_h^{ps,i} = Q_h^{ps,i}$ \cite{GuzEtal20,FabEtAl22}.
\end{remark}

We consider the analogous finite element method
of \eqref{fem}, but defined on Powell-Sabin splits:
find
$(\bu_h,p_h)\in \bV^{ps}_h\times Q^{ps}_h$ such that
\begin{align}\label{eqn:PSfem}
\Big\{\begin{array}{ll}
a_h(\bu_h,\bv_h)+b(p_h,\bv_h)=(\bm f,\bv_h),\\
b(q_h,\bu_h)-{\frac{1}{1+\gamma}}J_h(p_h,q_h)=0
\end{array}
\end{align}
for all $(\bv_h,q_h)\in\bV^{ps}_h \times Q^{ps}_h$. Here, the bilinear form
$a_h(\cdot,\cdot)$ is given by \eqref{eqn:ahDef} but with 
\begin{align*}
j_h(\bu,\bv) = \sum_{K\in \calT_h^{ps,\Gamma}} \frac1{h_K} \int_{K_\Gamma} \bu\cdot \bv,\qquad
{\bf j}_h(\bu,\bv) = \sum_{F\in \calF_h^{ps,\Gamma}} h_F \int_F [\p_n \bu][\p_n \bv],
\end{align*}
where $\mcf^{ps,\Gamma}$ is the set of edges in $\calT_h^{ps,\Gamma}$
that do not lie on $\p\Omega_h^{e}$.
The bilinear form $b(\cdot,\cdot)$ is defined in \eqref{eqn:bDef},
and the pressure ghost-stabilization term is
\[
J_h(q,p)=\sum_{F\in\mcf^{ps,\Gamma}} h_F \int_{F}[q][p].
\]
\begin{lemma}
With a slight abuse of notation, let $\calT_h^{ct,e}$
be the Clough-Tocher refinement of { $\calT_h^e$}
obtained by connecting the vertices of each triangle with its incenter.
Likewise, let $\widetilde{\mathcal{T}}_h^{ct,i}$ be defined
as in \eqref{eqn:TildeThCT} but with incenter refinement, and 
set
\[
\widetilde \Omega_h^{ct,i} = {\rm Int}\Big(\bigcup_{T\in \widetilde{\mathcal{T}}_h^{ct,i}} \bar K\Big).
\]
Then if $\bu\in \bV_h^{ps}$ satisfies \eqref{eqn:PSfem}, there holds $\Div \bu_h =0$ in $\widetilde \Omega_h^{ct,i} $.
\end{lemma}
\begin{proof}
For any $q\in \pol_0(\calT_h^{ct,e})$ with $q|_{\Omega_h^i}\in L^2_0(\Omega_h^i)$,
there holds $q\in Q^{ps}_h$.  Consequently, we can apply steps (1)--(3) in the proof of 
Lemma \ref{L:divfree} verbatim to conclude
\begin{equation}\label{eqn:LocalPSDiv}
\int_K \Div \bu_h = 0\qquad \forall K\in \widetilde \calT_h^{ct,i}.
\end{equation}
Next, fix a $K\in \widetilde\calT_h^{ct,i}$, and let $z\in \calS_h^{i,I}$
be the singular vertex in $\calT_h^{ps,i}$ such that $z\in \bar K$.
Let $\{K_z^{(1)},K_z^{(2)},K_z^{(3)},K_z^{(4)}\}\subset \calT_h^{ps,i}$
be the triangles in the Powell-Sabin refinement 
that have $z$ as a vertex. 
Let $K_\dagger \in \widetilde \calT_h^{ct,i}$ be an arbitrary triangle satisfying
 $K_\dagger\cap K_z^{(j)} = \emptyset\ (j=1,2,3,4)$, and set
 \begin{align*}
 q = \left\{
 \begin{array}{ll}
 \Div \bu_h & \text{on }K_z^{(j)}\ j=1,2,3,4,\\
 c & \text{on }K_\dagger,\\
 0 & \text{otherwise},
 \end{array} 
 \right.
 \end{align*}
 where $c\in \bbR$ is chosen such that $\int_{\Omega_h^i} q = 0$.
 By Proposition \ref{prop:DivOnPS}, there holds $q\in Q_h^{ps}$.  
We use \eqref{eqn:PSfem} and \eqref{eqn:LocalPSDiv} to obtain
\[
\sum_{j=1}^4 \int_{K_z^{(j)}} |\Div \bu_h|^2 = - c\int_{K_\dagger} \Div \bu_h = 0.
\]
Because $K\subset \cup_{j=1}^4 K_z^{(j)}$, we conclude $\Div \bu_h =0$
in $K$, and therefore $\Div \bu_h=0$ in $\widetilde \Omega_h^{ct,i}$.
\end{proof}

\subsection{Stability analysis on Powell-Sabin splits}
In this section, we derive a inf-sup condition
for the finite element pair $\bV_h^{ps,i}\times Q_h^{ps,i}$
on $\Omega_h^i$ that is uniformly bounded with respect to the discretization 
parameter $h$.  As a first step,
we state the degrees of freedom (DOFs)
of the finite element spaces given in \cite{GuzEtal20,FabEtAl22}.
\begin{lemma}\label{lem:PSDOFs}
A function $\bv\in \bV_h^{ps,i}$ is uniquely determined
by the values
\begin{alignat*}{2}
&\bv(a)\qquad  &&\text{for all vertices $a$ in $\calT_h^i$},\\
&\int_F (\bv\cdot \bn_F)\qquad &&\text{for all edges $F$ in $\calT_h^i$},\\
&\jump{\Div \bv}(z)\qquad &&\text{for all $z\in \calS_h^i$},\\
&\int_T (\Div \bv)p\qquad &&\forall p\in \mathring{\pol}_0(T^{ct})\quad \text{for all $T\in \calT_h^i$},
\end{alignat*}
where $\bn_F$ is a unit normal of $F$.
A function $q\in Q_h^{ps,i}$ is uniquely determined by
the values
\begin{alignat*}{2}
&\jump{q}(z)\qquad &&\text{for all $z\in \calS_h^i$},\\
&\int_T q p\qquad &&\forall p\in \pol_0(T^{ct})\quad \text{for all }T\in \calT_h^i.
\end{alignat*}
\end{lemma}

As an intermediate step in the stability analysis,
we first show an inf-sup stability result,
but with $Q_h^{ps,i}$ replaced
by piecewise constants with respect to the triangulation $\calT_h^i$.
\begin{lemma}\label{lem:PSPreStab}
{ There exists} $\beta_1>0$ independent of $h$ such that
\[
\beta_1\|\bar q\|_{L^2(\Omega_h^i)}\le \sup_{\bv\in \bV_h^{ps,i}\backslash \{0\}} \frac{\int_{\Omega_h^i} (\Div \bv) \bar q}{\|\nab \bv\|_{L^2(\Omega_h^i)}}\qquad 
\forall \bar q\in \mathring{\pol}_0(\calT_h^i).
\]
\end{lemma}
\begin{proof}
Let $\bar q\in  \mathring{\pol}_0(\calT_h)$.  Using the $P_2-P_0$ stability result in \cite{GuzmanMaxim18}, there exists
$\bar \bv\in \bpol_2^c(\calT_h^i)$ satisfying
\begin{equation}\label{eqn:P2P0Stab}
\beta_0\|\bar q\|_{L^2(\Omega_h^i)}\le \frac{\int_{\Omega_h^i} (\Div \bar \bv) \bar q}{\|\bar \bv\|_{H^1(\Omega_h^i)}},
\end{equation}
where $\beta_0$ is uniformly bounded below with respect to $h$.
We then use Lemma \ref{lem:PSDOFs} to uniquely define $\bv\in \bV_h^{ps,i}$ by the conditions
\begin{alignat*}{2}
&\bv(a) = \bar \bv(a)\qquad  &&\text{for all vertices $a$ in $\calT_h^i$},\\
&\int_F (\bv\cdot \bn_F) = \int_F (\bar \bv\cdot \bn_F)\qquad &&\text{for all edges $F$ in $\calT_h^i$},\\
&\jump{\Div \bv}(z) = \jump{\Div \bar \bv}(z) = 0\qquad &&\text{for all $z\in \calS_h^i$},\\
&\int_T (\Div \bv)p = \int_T (\Div \bar \bv)p\qquad &&\forall p\in \mathring{\pol}_0(T^{ct})\text{ for all }T\in \calT_h^i.
\end{alignat*}
Because $\bar q$ is piecewise constant on $\calT_h^i$, the second condition implies
\[
\int_T (\Div \bv)\bar q = \int_T (\Div \bar \bv)\bar q\qquad \forall T\in \calT_h^i,
\]
and so
\begin{align}\label{eqn:CommuteABC}
\int_{\Omega_h^i} (\Div \bv)\bar q =  \int_{\Omega_h^i} (\Div \bar \bv)\bar q.
\end{align}
A standard scaling shows $\| \bv\|_{H^1(\Omega_h^i)}\le C_0 \| \bar \bv\|_{H^1(\Omega_h^i)}$
with $C_0>0$ independent of $h$.
This estimate, along with \eqref{eqn:P2P0Stab}--\eqref{eqn:CommuteABC} implies the result
with $\beta_1 = \beta_0/C_0$.
\end{proof}

\begin{theorem}\label{thm:PSInfSup}
There  exists $\beta_*>0$ independent of $h$ such that
\begin{equation}\label{eqn:PSInfSup}
\beta_*\| q\|_{L^2(\Omega_h^i)}\le \sup_{\bv\in \bV_h^{ps,i}\backslash \{0\}} \frac{\int_{\Omega_h^i} (\Div \bv)  q}{\| \bv\|_{H^1(\Omega_h^i)}}\qquad \forall  q\in Q_h^{ps,i}.
\end{equation}
\end{theorem}
\begin{proof}
Fix $q\in Q_h^{ps,i}$, and let $\bar q\in \mathring{\pol}_0(\calT_h^i)$ be the $L^2$-projection of $q$ onto $\mathring{\pol}_0(\calT_h^i)$, i.e.,
\[
\bar q = \frac1{|T|} \int_T q \qquad \forall T\in \calT_h^i.
\]
Using Lemma  \ref{lem:PSDOFs}, we define $\bw\in \bV_h^{ps,i}$ by the conditions
\begin{alignat*}{2}
&\bw(a) = 0\qquad  &&\text{for all vertices $a$ in $\calT_h^i$},\\
&\int_F (\bw\cdot \bn_F) = 0\qquad &&\text{for all edges $F$ in $\calT_h^i$},\\
&\jump{\Div \bw}(z) = \jump{(q-\bar q)}(z) = \jump{q}(z)\qquad &&\text{for all $z\in \calS_h^i$},\\
&\int_T (\Div \bw)p = \int_T (q-\bar q)p\qquad &&\forall p\in \mathring{\pol}_0(T^{ct})\text{ for all $T\in \calT_h^i$}.
\end{alignat*}
Because $\Div \bw- (q-\bar q)\in Q_h^{ps,i}$,
the last {  two} conditions imply $\Div \bw = q-\bar q$ by Lemma \ref{lem:PSDOFs}.  Moreover, scaling shows $\| \bw\|_{H^1(\Omega_h^i)}\le C_1 \|q-\bar q\|_{L^2(\Omega_h^i)}$
with $C_1>0$ independent of $h$.
This implies
\begin{align*}
C^{-1}_1 \|q-\bar q\|_{L^2(\Omega_h^i)} \le  \frac{\int_{\Omega_h^i} (q-\bar q)(q-\bar q)}{\| \bw\|_{H^1(\Omega_h^i)}}
=  \frac{\int_{\Omega_h^i} (q-\bar q)q}{\| \bw\|_{H^1(\Omega_h^i)}}
=  \frac{\int_{\Omega_h^i} (\Div \bw)q}{\| \bw\|_{H^1(\Omega_h^i)}}
\le \sup_{\bv\in \bV_h^{ps,i}\backslash \{0\}}  \frac{\int_{\Omega_h^i} (\Div \bv)q}{\| \bv\|_{H^1(\Omega_h^i)}}.
\end{align*}
Finally, using this estimate and Lemma \ref{lem:PSPreStab} we conclude
\begin{align*}
\|q\|_{L^2(\Omega_h^i)}
&\le \|q-\bar q\|_{L^2(\Omega_h^i)} + \|\bar q\|_{L^2(\Omega_h^i)}\\
&\le \|q-\bar q\|_{L^2(\Omega_h^i)} + \beta_1^{-1}\Big(\sup_{\bv\in \bV_h^{ps,i}\backslash \{0\}} \frac{\int_{\Omega_h^i} (\Div \bv)q}{\| \bv\|_{H^1(\Omega_h^i)}} + \|q-\bar q\|_{L^2(\Omega_h^i)}\Big)\\
&\le \Big(C_1(1+\beta_1^{-1}) +\beta_1^{-1}\Big)
  \sup_{\bv\in \bV_h^{ps,i}\backslash\{0\}} \frac{\int_{\Omega_h^i} (\Div \bv)q}{\| \bv\|_{H^1(\Omega_h^i)}}.
\end{align*}
Thus, \eqref{eqn:PSInfSup} holds with $\beta_* = \Big(C_1(1+\beta_1^{-1}) +\beta_1^{-1}\Big)^{-1}$.
\end{proof}

From the inf-sup stability result
in Theorem \ref{thm:PSInfSup},  we obtain the following stability 
result for the finite element method \eqref{eqn:PSfem}.  Since its proof is essentially the same
as the proof of Theorem \ref{thm:WP}, it is omitted.

\begin{theorem}\label{thm:WPofPS}
There exists a unique $(\bu_h,p_h)\in\bV^{ps}_h\times Q^{ps}_h$ satisfying \eqref{eqn:PSfem}.
Moreover,
\begin{align*}
\| p_h\|_{L^2(\Omega)}
\leq C(1+\gamma)^{\frac12}\|{\bm f}\|_{\bV_{h}'},\quad
\|\bu_h\|_{\bV_h}\leq C\|{\bm f}\|_{\bV_h'},
\end{align*}
for some $C>0$ independent of $\gamma$, $h$, and the position of $\Gamma$ in the mesh.
\end{theorem}

\subsection{Convergence analysis on Powell-Sabin splits}
Here, we adopt the arguments
of Section \ref{sec:Conv} to the 
finite element method \eqref{eqn:PSfem}
defined on Powell-Sabin splits.
The key result is the following lemma which establishes
the approximation properties
of the discrete divergence-free subspace.
\begin{lemma}
Let 
\[
\bZ_h^{ps} = \{\bv\in \bV^{ps}_h:\ \Div \bv = 0\text{ in }\Omega_h^e\},
\]
and let $\bu$ be the divergence-free extension of the solution to \eqref{eqn:Stokes}.
There holds
\[
\inf_{\bv\in \bZ_h } \|\nab (\bu-\bv)\|_{H^1(T)}\le C h_T |\bu|_{H^2(\omega_T)}\qquad \forall T\in \calT_h^e.
\]
\end{lemma}
\begin{proof}
Let ${\bm I}_h \bu\in \bpol_1^c(\calT_h^e)$ denote the linear Scott-Zhang interpolant of $\bu$
with respect to $\calT_h^e$.  We define
define $\bv\in \bV_h^{ps}$ uniquely via the conditions
\begin{alignat*}{2}
&\bv(a) = ({\bm I}_h\bu)(a) \qquad  &&\text{for all vertices $a$ in $\calT_h^e$},\\
&\int_F (\bv\cdot \bn_F) = \int_F (\bu\cdot \bn_F)\qquad &&\text{for all edges $F$ in $\calT_h^e$},\\
&\jump{\Div \bv}(z) = 0\qquad &&\text{for all $z\in \calS_h^e$},\\
&\int_T (\Div \bv)p = 0&&\forall p\in \mathring{\pol}_0(T^{ct})\text{ for all $T\in \calT_h^e$}.
\end{alignat*}
Using the last three conditions
and the inclusion $\Div \bv\in Q_h^{ps}$,
we conclude $\Div \bv = 0$ by Lemma~\ref{lem:PSDOFs}.
Furthermore, noting $(\bv-\bI_h \bu)\in \bV_h^{ps}$, 
there holds by scaling and properties of the Scott-Zhang interpolant, 
\begin{align*}
\|\nab (\bv-{\bm I}_h \bu)\|_{L^2(T)}^2 
&\le C \Big(h_T^{-2}\Big|\int_{\p T} (\bv-\bI_h \bu)\cdot \bn\Big|^2
+h_T^2 \mathop{\sum_{z\in \calS_h^e}}_{z\in \bar T} \big|\jump{\Div (\bv-\bI_h \bu)}(z)\big|^2\\
&\qquad +\mathop{\sup_{p\in \mathring{\pol}_0(T^{ct})}}_{\|p\|_{L^2(T)}=1} \Big|\int_T (\Div(\bv-\bI_h \bu))p\Big|^2\Big)\\
&= C \Big(h_T^{-2}\Big|\int_{\p T} (\bu-\bI_h \bu)\cdot \bn\Big|^2
 +\mathop{\sup_{p\in \mathring{\pol}_0(T^{ct})}}_{\|p\|_{L^2(T)}=1} \Big|\int_T \Div(\bu-\bI_h \bu)p\Big|^2\Big)\\ 
&\le  C\big( h_T^{-1} \|\bu-{\bm I}_h \bu\|_{L^2(\p T)}^2+\|\nab (\bu-\bI_h \bu)\|_{L^2(T)}^2)\\
&\le C \big(\|\nab (\bu-{\bm I}_h \bu)\|_{L^2(T)}^2 + h_T^{-2} \|\bu - {\bm I}_h \bu\|_{L^2(T)}^2\big)\le C h_T^2 |\bu|_{H^2(\omega_T)}^2.
\end{align*}
Thus,
\[
\|\nab (\bu-\bv)\|_{L^2(T)}\le \|\nab (\bu-{\bm I}_h \bu)\|_{L^2(\Omega)}+\|\nab (\bv-{\bm I}_h \bu)\|_{L^2(T)}\le C h_T |\bu|_{H^2(\omega_T)}.
\]
\end{proof}

With the approximation properties
of the divergence-free subspace established,
we can use the same arguments
in the proof of Theorem \ref{thm:MainError} (with $k=1$)
to derive a first-order error estimate.
\begin{theorem}
Let $(\bu_h,p_h)\in \bV_h^{ps}\times Q_h^{ps}$
be the solution to \eqref{eqn:PSfem}.
Then the estimate \eqref{VelErr} holds with $k=1$.
\end{theorem}

\section{Numerical Experiments}\label{sec:Num}

In this section, we perform some simple numerical experiments
and compare the results with the theory developed in the previous sections.
In the set of experiments, we
take the domain to be the circle with center
$(0.5,0.5)$ and radius $\sqrt{0.2}$:
\[
\Omega = \{x\in \bbR^2:\ (x_1-0.5)^2+(x_2-0.5)^2<0.2\}.
\]
The data is chosen such that the exact solution is 
\begin{equation}\label{eqn:ExactSolution}
\bu = 
\begin{pmatrix}
 2(x_1^2-x_1+0.25+x_2^2-x_2)(2x_2-1)\\
 -2(x_1^2-x_1+0.25+x_2^2-x_2)(2x_1-1)
 \end{pmatrix}
,\qquad
p = 10^3 \big(10(x_1^2-x_2^2)^2+c\big),
\end{equation}
with normalizing constant $c\in \bbR$.
We take the covering domain $S = (0,1)^2$, the unit square,
and consider a sequence of type I triangulations $\calT_h$
defined on $S$.

We compute the finite element method \eqref{fem}
\rev{on Clough-Tocher splits}
with $k=2$
for a decreasing set of mesh parameters
$h$ and various grad-div parameters $\gamma$.
Approximate numerical integration rules on cut elements 
and on the boundary $\Gamma$ are defined 
using isoparametric mappings similar to \cite{Lehrenfeld16}.

The resulting $H^1$ velocity errors
and $L^2$ pressure errors are given
in Figure  \ref{fig:beta1E3Errors},
and the $L^2$ divergence error of the computed velocities
are presented in Figure \ref{fig:beta1E3ErrorsDivergence}.
These error plots show asymptotic optimal order (quadratic) convergence rates
for both the discrete velocity and pressure solutions for fixed 
grad-div parameter $\gamma$ and fixed Nitsche parameter $\eta$.
The figures also list the errors
with grad-div parameter $\gamma = 10h^{-1}$ 
for both constant Nitsche parameter $\eta=100$
and mesh-dependent Nitsche parameter $\eta = 10h^{-1}$.
The results indicate that the method performs
best, with respect to errors,
if both penalty parameters scale like $O(h^{-1})$.

\begin{figure}
\centering
\begin{tikzpicture}[scale = 0.9]
\begin{loglogaxis}[
	xlabel={$h$},
	ylabel={$\|\nab (\bu - \bu_h)\|_{L^2(\Omega)}$},
	legend style={nodes={scale=0.6, transform shape}},
	legend pos=north west]
\addplot[color = blue] coordinates {
(0.2,1.79E+00)
(0.1,5.00E-01)
(0.05,1.17E-01)
(0.025,2.22E-02)
(0.0125,3.91E-03)
(0.00625,7.30E-04)
};
\addplot[color = red,dashed] coordinates {
(0.2,1.74E+00)
(0.1,4.88E-01)
(0.05,1.14E-01)
(0.025,2.16E-02)
(0.0125,3.81E-03)
(0.00625,7.13E-04)
};

\addplot[color = purple,dotted] coordinates {
(0.2,9.85E-01)
(0.1,6.98E-01)
(0.05,1.15E-01)
(0.025,1.51E-02)
(0.0125,7.38E-03)
(0.00625,1.60E-03)
};
\addplot[color = black,dashed] coordinates {
(0.2,9.95E-01)
(0.1,6.63E-01)
(0.05,1.33E-01)
(0.025,6.53E-03)
(0.0125,9.59E-04)
(0.00625,2.18E-04)
};

\legend{{$\gamma=0,\eta=100$},{$\gamma=1,\eta=100$},{$\gamma=10h^{-1},\eta=100$},{$\gamma=10h^{-1},\eta=10h^{-1}$}};
\end{loglogaxis}
\end{tikzpicture}
\begin{tikzpicture}[scale = 0.9]
\begin{loglogaxis}[
	xlabel={$h$},
	ylabel={$\|p - p_h\|_{L^2(\Omega)}$},
	legend style={nodes={scale=0.6, transform shape}},
	legend pos=north west]
\addplot[color = blue] coordinates {
(0.2,4.65E+01)
(0.1,1.34E+01)
(0.05,3.36E+00)
(0.025,7.73E-01)
(0.0125,1.96E-01)
(0.00625,4.79E-02)
};
\addplot[color = red,dashed] coordinates {
(0.2,4.67E+01)
(0.1,1.34E+01)
(0.05,3.38E+00)
(0.025,7.74E-01)
(0.0125,1.96E-01)
(0.00625,4.79E-02)
};

\addplot[color = purple,dotted] coordinates {
(0.2,6.14E+01)
(0.1,5.04E+01)
(0.05,1.34E+01)
(0.025,1.76E+00)
(0.0125,1.57E+00)
(0.00625,5.60E-01)
};
\addplot[color = black,dashed] coordinates {
(0.2,6.19E+01)
(0.1,4.80E+01)
(0.05,1.52E+01)
(0.025,1.15E+00)
(0.0125,2.68E-01)
(0.00625,7.97E-02)
};

\legend{{$\gamma=0,\eta=100$},{$\gamma=1,\eta=100$},{$\gamma=10h^{-1},\eta=100$},{$\gamma=10h^{-1},\eta=10h^{-1}$}};
\end{loglogaxis}
\end{tikzpicture}
\caption{\label{fig:beta1E3Errors}Errors on a sequence of refined triangulations
for the velocity (left) and pressure (right) with different grad-div parameters.}
\end{figure}
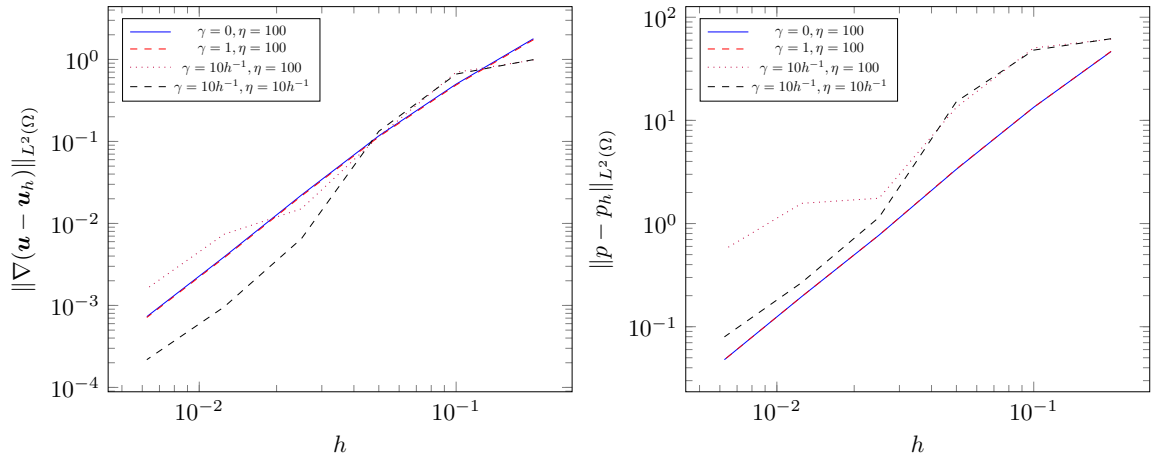

\begin{figure}
\centering
\begin{tikzpicture}[scale = 0.9]
\begin{loglogaxis}[
	xlabel={$h$},
	ylabel={$\|\Div\bu_h\|_{L^2(\Omega)}$},
	legend style={nodes={scale=0.6, transform shape}},
	legend pos=north west]
\addplot[color = blue] coordinates {
(0.2,1.18E+00)
(0.1,3.15E-01)
(0.05,8.36E-02)
(0.025,1.49E-02)
(0.0125,2.49E-03)
(0.00625,4.52E-04)
};

\addplot[color = red,dashed] coordinates {
(0.2,1.14E+00)
(0.1,3.05E-01)
(0.05,8.06E-02)
(0.025,1.43E-02)
(0.0125,2.41E-03)
(0.00625,4.38E-04)
};

\addplot[color = purple,dotted] coordinates {
(0.2,4.27E-01)
(0.1,2.43E-01)
(0.05,3.47E-02)
(0.025,2.46E-03)
(0.0125,1.19E-03)
(0.00625,2.21E-04)
};

\addplot[color = black,dashed] coordinates {
(0.2,4.36E-01)
(0.1,2.30E-01)
(0.05,3.93E-02)
(0.025,1.37E-03)
(0.0125,1.47E-04)
(0.00625,2.65E-05)
};
\legend{{$\gamma=0,\eta=100$},{$\gamma=1,\eta=100$},{$\gamma=10h^{-1},\eta=100$},{$\gamma=10h^{-1},\eta=10h^{-1}$}};
\end{loglogaxis}
\end{tikzpicture}
\includegraphics[scale=0.14]{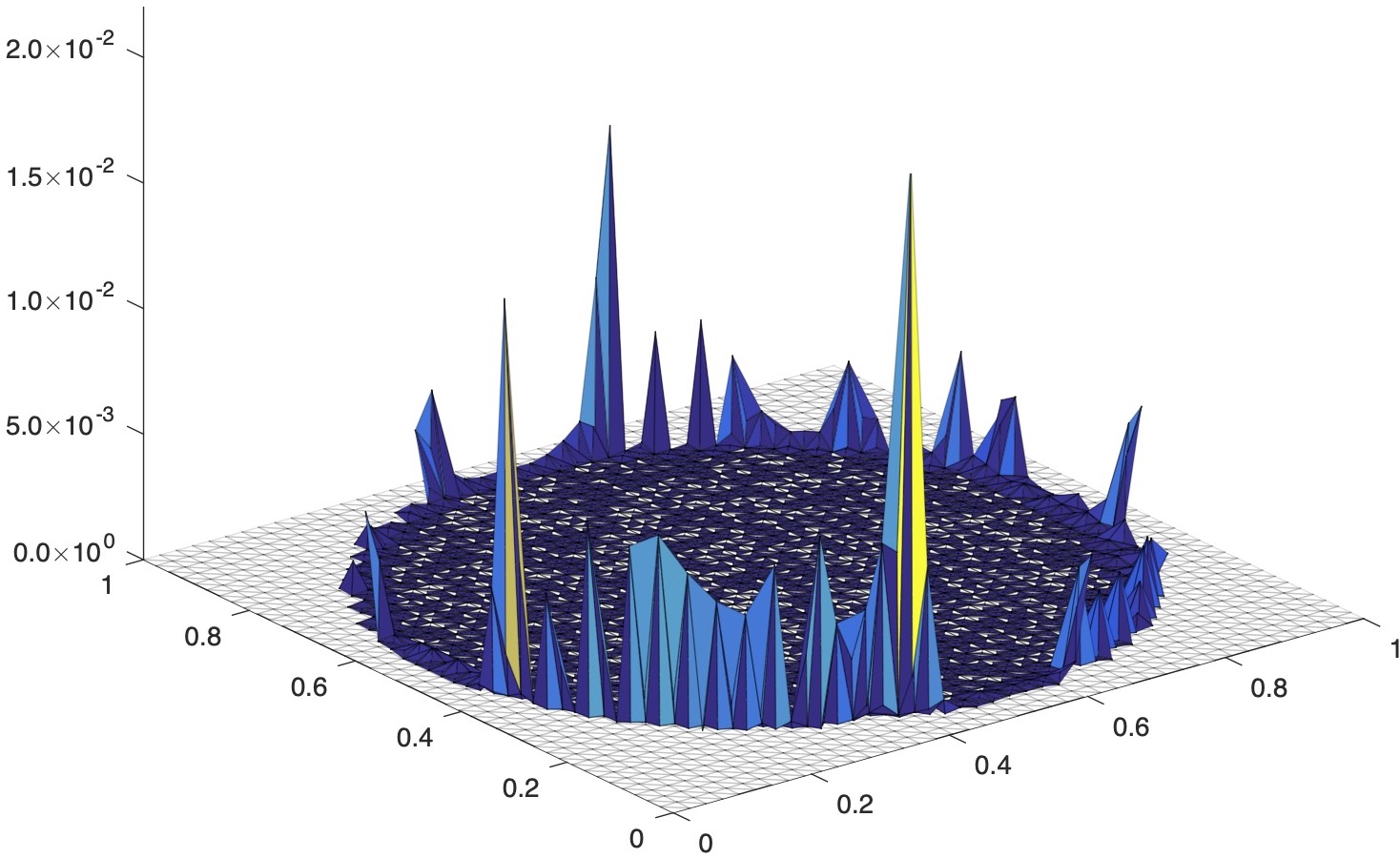}
\caption{\label{fig:beta1E3ErrorsDivergence} Left: Divergence errors on a sequence of refined triangulations
for the velocity with different grad-div parameters. Right: Plot of $|\Div \bu_h|$ with $h=1/40$, $\gamma = \eta = 10h^{-1}$.
}

\end{figure}

\appendix

\section{Proof of \eqref{ApproxProp1}}
The approximation property \eqref{ApproxProp1} is derived
by constructing a Fortin operator using the recent results in \cite{FGN20}.
We consider the three-dimensional case; the analogous 2D arguments are similar (and simpler).

For $T\in \calT_h^e$, let $T^{ct}$ denote the local triangulation of four (sub)tetrahedra, obtained by performing a Clough-Tocher (or Alfeld)
split of $T$.  
We also define the smooth space
\[
\mathring{\bM}_{k+1}(T^{ct}) = \{\bkappa\in \mathring{\bpol}_{k+1}^c(T^{ct}): {\bf curl}\,\bkappa\in \mathring{\bpol}_k^c(T^{ct})\}.
\]

\begin{lemma}
There exists an operator
$\bPi_{0,T}:\bH^1(T)\to \mathring{\bpol}_k^c(T^{ct})$ uniquely determined by the conditions
\begin{subequations}
\label{eqn:ConditionsPi0}
\begin{alignat}{3}
&\int_T (\bPi_{0,T} \bv)\cdot {\bf curl}\,\bkappa = \int_T \bv \cdot {\bf curl}\,\bkappa \qquad &&\forall \bkappa\in \mathring{\bM}_{k+1}(T^{ct}),\\
&\int_T (\Div (\bPi_{0,T} \bv)) \kappa = \int_T (\Div \bv)\kappa \qquad &&\forall \kappa\in \mathring{\pol}_{k-1}(T^{ct}).
\end{alignat}
\end{subequations}
Moreover, 
\[
\|\nab \bPi_{0,T} \bv\|_{L^2(T)}\le C\big(\|\nab \bv\|_{L^2(T)}+ h_T^{-1} \|\bv\|_{L^2(T)}\big)\qquad \forall \bv\in \bH^1(T).
\]
\end{lemma}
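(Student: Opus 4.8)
The plan is to construct the operator $\bPi_{0,T}$ directly from a dual basis argument and then establish the stability bound by a scaling/homogeneity argument on a reference configuration. First I would verify that the map is well defined: since $\bPi_{0,T}\bv$ is sought in the finite-dimensional space $\mathring{\bpol}_k^c(T^{ct})$, and the conditions \eqref{eqn:ConditionsPi0} prescribe the action of $\bPi_{0,T}\bv$ against two families of test functions (the curls of $\mathring{\bM}_{k+1}(T^{ct})$ and the divergence-tested $\mathring{\pol}_{k-1}(T^{ct})$), I would check that the total number of conditions matches $\dim \mathring{\bpol}_k^c(T^{ct})$ and that the associated square linear system is nonsingular. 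The natural way to confirm nonsingularity is to show the homogeneous problem has only the trivial solution: if $\bw\in \mathring{\bpol}_k^c(T^{ct})$ satisfies $\int_T \bw\cdot\bcurl\,\bkappa=0$ for all $\bkappa\in \mathring{\bM}_{k+1}(T^{ct})$ and $\int_T (\Div\bw)\kappa=0$ for all $\kappa\in\mathring{\pol}_{k-1}(T^{ct})$, then $\bw=0$. This is precisely where the exact-sequence results of \cite{FGN20} on Alfeld splits enter: the local Stokes complex on $T^{ct}$ guarantees that the curls of $\mathring{\bM}_{k+1}(T^{ct})$ together with the gradients realizing the divergence span enough of $\mathring{\bpol}_k^c(T^{ct})$ to force $\bw=0$. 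I expect establishing this unisolvence to be the main obstacle, since it relies on the precise dimension count and the surjectivity of the divergence from the smooth velocity space onto $\mathring{\pol}_{k-1}(T^{ct})$ provided by the Alfeld-split complex.

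Once well-posedness is in hand, I would turn to the stability estimate. The strategy is to pass to a reference element $\hat T$ via an affine map, where the operator becomes a fixed linear map $\bPi_{0,\hat T}$ between finite-dimensional spaces on $\hat T^{ct}$. On the reference configuration, $\bPi_{0,\hat T}:\bH^1(\hat T)\to\mathring{\bpol}_k^c(\hat T^{ct})$ is a bounded linear operator (boundedness being automatic in finite dimensions, with the norm depending only on the fixed reference geometry), so I obtain
\[
\|\nab \bPi_{0,\hat T}\bhv\|_{L^2(\hat T)}\le C\|\bhv\|_{H^1(\hat T)}\qquad\forall\,\bhv\in\bH^1(\hat T),
\]
with $C$ depending only on $\hat T$. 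The key point making this scale correctly is that both defining conditions in \eqref{eqn:ConditionsPi0} are affine-covariant: testing against $\bcurl\,\bkappa$ and against $\Div$-paired functions transforms consistently under the Piola-type pullbacks, so the reference operator genuinely represents $\bPi_{0,T}$ after the change of variables.

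Finally I would scale back to the physical element $T$. Using the standard equivalences $\|\nab\bv\|_{L^2(T)}\approx h_T^{(d-2)/2}\|\hnab\bhv\|_{L^2(\hat T)}$ and $\|\bv\|_{L^2(T)}\approx h_T^{d/2}\|\bhv\|_{L^2(\hat T)}$ under affine scaling (here $d=3$), the reference bound $\|\hnab\bPi_{0,\hat T}\bhv\|_{L^2(\hat T)}\le C(\|\hnab\bhv\|_{L^2(\hat T)}+\|\bhv\|_{L^2(\hat T)})$ transforms into
\[
\|\nab\bPi_{0,T}\bv\|_{L^2(T)}\le C\big(\|\nab\bv\|_{L^2(T)}+h_T^{-1}\|\bv\|_{L^2(T)}\big),
\]
which is exactly the claimed inequality; the factor $h_T^{-1}$ appears precisely from the mismatch in scaling powers between the $L^2$ norm of the gradient and the $L^2$ norm of the function. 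Shape-regularity of $\mct$ ensures the constant $C$ is uniform over all $T\in\calT_h^e$. The routine verifications — the exact exponents in the scaling relations and the affine-covariance of the curl and divergence pairings — I would carry out in the detailed write-up, but the conceptual content is confined to the unisolvence step.
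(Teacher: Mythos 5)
Your proposal is correct and takes essentially the same route as the paper: unisolvence is obtained from the Alfeld-split exact-sequence results of \cite{FGN20} (the paper simply cites \cite[Lemma 4.16]{FGN20}), and the stability bound follows from a scaling argument that uses the defining conditions \eqref{eqn:ConditionsPi0} to pass from $\bPi_{0,T}\bv$ back to $\bv$ (the paper scales by a pure dilation and invokes equivalence of norms on the finite-dimensional space, which is the same content as your bounded-reference-operator formulation). One small caveat: boundedness of the reference operator is not ``automatic in finite dimensions'' --- a linear map from an infinite-dimensional space into a finite-dimensional one can be unbounded --- but it does hold here because the defining functionals $\bv\mapsto\int_T\bv\cdot\bcurl\,\bkappa$ and $\bv\mapsto\int_T(\Div\bv)\kappa$ are continuous on $\bH^1(T)$, so your argument goes through.
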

\begin{proof}
The existence of such an operator uniquely determined
by \eqref{eqn:ConditionsPi0} follows from \cite[Lemma 4.16]{FGN20}.

{It remains to show the stability estimate.  This is done via a scaling argument.

To ease presentation, set $\bv_T = \bPi_{0,T}\bv$.
Let $\hat T = \{\frac{x}{h_T}:\ x\in T\}$ be a dilation of $T$, 
and define $\hat \bv_T\in \mathring{\bpol}_k(\hat T^{ct})$ as
$\hat \bv_T(\hat x) = \bv_T(x)$ with $x = h_T \hat x$, so that
$\p \bv_T = h_T^{-1}\hat \p \hat \bv_T$.
In particular, $\Div \bv_T = h_T^{-1} \widehat \Div \hat \bv_T$
and $\bcurl\, \bkappa = h_T^{-1} \widehat \bcurl\,\hat \bkappa$.
Using a change of variables and equivalence of norms, we compute
\begin{align*}
h_T^{-1}\|\nab \bv_T\|_{L^2(T)}^2 
&= \|\hat \nab \hat \bv_T\|_{L^2(\hat T)}^2
\approx
\sup_{\hat \bkappa\in \mathring{\bM}_{k+1}(\hat T^{ct})\backslash \{0\}}\Big|  \frac{\int_{\hat T} \hat \bv_T\cdot \widehat \bcurl\,\hat \bkappa}{\|\widehat\bcurl \hat \bkappa\|_{L^2(\hat T)}}\Big|^2
+\sup_{\hat \kappa\in \mathring{\pol}_{k-1}(\hat T^{ct})\backslash \{0\}}\Big|  \frac{\int_{\hat T} \widehat \Div \hat \bv_T \hat \kappa}{\| \hat \kappa\|_{L^2(\hat T)}}\Big|^2\\
& = \sup_{\hat \bkappa\in \mathring{\bM}_{k+1}( T^{ct})\backslash \{0\}}\Big|  \frac{h_T^{-3} \int_{T}  \bv_T\cdot (h_T \bcurl\, \bkappa)}{h_T^{-1/2} \|\bcurl  \bkappa\|_{L^2( T)}}\Big|^2
+\sup_{ \kappa\in \mathring{\pol}_{k-1}( T^{ct})\backslash \{0\}}\Big|  \frac{h_T^{-3} \int_{ T}  (h_T \Div  \bv_T)  \kappa}{h_T^{-3/2} \|  \kappa\|_{L^2( T)}}\Big|^2\\
& = h_T^{-3} \sup_{\hat \bkappa\in \mathring{\bM}_{k+1}( T^{ct})\backslash \{0\}}\Big|  \frac{ \int_{T}  \bv\cdot  \bcurl\, \bkappa}{\|\bcurl  \bkappa\|_{L^2( T)}}\Big|^2
+h_T^{-1} \sup_{ \kappa\in \mathring{\pol}_{k-1}( T^{ct})\backslash \{0\}}\Big|  \frac{ \int_{ T}  (\Div  \bv)  \kappa}{ \|  \kappa\|_{L^2( T)}}\Big|^2\\
&\le h_T^{-3} \|\bv\|_{L^2(T)}^2+ h_T^{-1} \|\Div \bv\|_{L^2(T)}^2.
\end{align*}
}

%
%
\end{proof}

Set $\bPi_0:\bH^1(\Omega^e)\to \bV_h$ such that $\bPi_0|_T = \bPi_{0,T}$ for all $T\in \calT_h^e$.
Let $\bI_h:\bH^1(\Omega)\to \bpol^c_k(\calT_h^e)\subset \bV_h$ be the $k$th degree Scott-Zhang interpolant which satisfies ($k\ge 3$) \cite{ScottZhang90}
\begin{equation}\label{eqn:P3P0Stability}
\int_F \bI_h \bv = \int_F \bv\qquad \text{for all faces in $\calT_h^e$}\qquad \forall \bv\in \bH^1(\Omega_h^e).
\end{equation}
Finally, we set $\bPi_h:\bH^1(\Omega_h^e)\to \bV_h$ as
\[
\bPi_h = \bI_h+ \bPi_0({\bm 1}-\bI_h),
\]
where ${\bm 1}$ is the identity operator.
\begin{proposition}
There holds, for all $\bv\in \bH^1(\Omega^e)$,
\begin{align*}
\int_{\Omega^e_h} (\Div (\bPi_h\bv))q = \int_{\Omega^e_h} (\Div \bv)q\qquad \forall q\in \pol_{k-1}(\calT_h^{ct,e}).
\end{align*}
\end{proposition}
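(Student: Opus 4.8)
The plan is to prove the equivalent cancellation statement, namely that $\int_{\Omega_h^e}\Div(\bPi_h\bv-\bv)\,q=0$ for every $q\in\pol_{k-1}(\calT_h^{ct,e})$. Writing $\bphi=({\bm 1}-\bI_h)\bv$, the definition $\bPi_h=\bI_h+\bPi_0({\bm 1}-\bI_h)$ gives $\bPi_h\bv-\bv=(\bI_h\bv-\bv)+\bPi_0\bphi=\bPi_0\bphi-\bphi$, so it suffices to show $\int_{\Omega_h^e}\Div(\bPi_0\bphi-\bphi)\,q=0$. Since $\bPi_0$ acts element-by-element via $\bPi_0|_T=\bPi_{0,T}$, I would split the integral as a sum over $T\in\calT_h^e$ and, on each $T$, decompose the test function as $q|_T=\bar q_T+(q|_T-\bar q_T)$ with element mean $\bar q_T:=|T|^{-1}\int_T q$. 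The remainder satisfies $q|_T-\bar q_T\in\mathring{\pol}_{k-1}(T^{ct})$, which is exactly the space appearing in the second condition of \eqref{eqn:ConditionsPi0}.

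For the mean-zero part, the second identity in \eqref{eqn:ConditionsPi0}, applied with $\kappa=q|_T-\bar q_T$, yields $\int_T\Div(\bPi_{0,T}\bphi-\bphi)(q-\bar q_T)=0$ on each $T$. Thus the whole integral reduces to $\sum_{T\in\calT_h^e}\bar q_T\int_T\Div(\bPi_{0,T}\bphi-\bphi)$, and the crux of the argument is to show this constant part vanishes. Here I would integrate by parts on each $T$: the divergence theorem gives $\int_T\Div(\bPi_{0,T}\bphi-\bphi)=\int_{\partial T}(\bPi_{0,T}\bphi-\bphi)\cdot\bn$, and since $\bPi_{0,T}\bphi\in\mathring{\bpol}_k^c(T^{ct})$ has vanishing trace on $\partial T$, the first contribution drops out and leaves $-\int_{\partial T}\bphi\cdot\bn$.

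It then remains to observe that $\bphi=({\bm 1}-\bI_h)\bv$ has zero average on every macro face. Indeed, the Scott--Zhang property \eqref{eqn:P3P0Stability} gives $\int_F\bphi={\bm 0}$ for each face $F$ of $\calT_h^e$; because each such $F$ is flat with constant outward normal $\bn_F$, we get $\int_F\bphi\cdot\bn=\bn_F\cdot\int_F\bphi=0$, and hence $\int_{\partial T}\bphi\cdot\bn=0$ for every $T\in\calT_h^e$. Consequently the constant part also vanishes term-by-term, and adding the two contributions over all $T$ completes the proof.

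The step I expect to be the main obstacle is the bookkeeping of the two test-function components rather than any hard estimate: one must use the full macro-element mean $\bar q_T$ so that the remainder lands precisely in $\mathring{\pol}_{k-1}(T^{ct})$ as required by \eqref{eqn:ConditionsPi0}, while the cancellation of the constant part relies not on $\bPi_0$ at all but on the macro-face average preservation \eqref{eqn:P3P0Stability} of $\bI_h$ together with the zero boundary trace of $\bPi_{0,T}\bphi$. I would also note that no global continuity or the constraint defining $\bV_h$ enters the divergence identity itself; those properties are only needed separately to certify $\bPi_h\bv\in\bV_h$.
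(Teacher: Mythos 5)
Your proof is correct and follows essentially the same route as the paper's: both split $q$ into its elementwise mean $\bar q$ plus a mean-zero remainder in $\mathring{\pol}_{k-1}(T^{ct})$, handle the remainder via the second condition in \eqref{eqn:ConditionsPi0}, and handle the constant part via the divergence theorem together with the vanishing trace of $\bPi_{0,T}$ and the face-average preservation \eqref{eqn:P3P0Stability} of $\bI_h$. The only difference is organizational (you work with the single difference $\bPi_h\bv-\bv$ rather than term-by-term), and your explicit remark that the $\bar q$-part of the $\bPi_0$ contribution dies by the zero boundary trace is a step the paper leaves implicit.
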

\begin{proof}
Fix $q\in \pol_{k-1}(\calT_h^{ct,e})$, and let $\bar q\in \pol_0(\calT^e_h)$
be its $L^2$ projection onto $\pol_0(\calT^e_h)$.  Note that $(q-\bar q)|_T\in \mathring{\pol}_{k-1}(T^{ct})$ for all $T\in \calT_h^e$.
We then write, using the divergence theorem, \eqref{eqn:ConditionsPi0} and \eqref{eqn:P3P0Stability},
\begin{align*}
\int_{\Omega^e} (\Div (\bPi_h\bv))q 
& = \int_{\Omega^e_h} (\Div (\bI_h\bv))(q-\bar q) + \int_{\Omega^e_h} \Div(\bPi_0({\bm 1}-\bI_h)\bv)(q-\bar q) + \int_{\Omega^e_h} (\Div (\bI_h \bv))\bar q\\
& = \int_{\Omega^e_h} (\Div (\bI_h\bv))(q-\bar q) + \int_{\Omega^e_h} \Div(({\bm 1}-\bI_h)\bv)(q-\bar q) + \int_{\Omega^e_h} (\Div  \bv)\bar q\\
& = \int_{\Omega^e_h} (\Div \bv)q.
\end{align*}
\end{proof}

The estimate \eqref{ApproxProp1} now follows from the following theorem.
\begin{theorem}
There holds for all divergence--free $\bu\in \bH^1(\Omega_h^e)$
\begin{align*}
\inf_{\bw_h\in \bZ_h} \|\nab (\bu-\bw_h)\|_{L^2(T)}\le C (\|\nab (\bu-\bI_h \bu)\|_{L^2(T)}+h_T^{-1}\|\bu-\bI_h \bu\|_{L^2(T)})\quad \forall T\in \calT_h^e.
\end{align*}
Therefore, if in addition $\bu\in \bH^{k+1}(\Omega_h^e)$, then
\begin{align*}
\inf_{\bw_h\in \bZ_h} \|\nab (\bu-\bw_h)\|_{L^2(T)}\le C h_T^{k} |\bu|_{H^{k+1}(\omega_T)}.
\end{align*}
\end{theorem}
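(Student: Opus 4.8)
The plan is to produce a near-optimal element of the discrete kernel $\bZ_h$ explicitly, by applying the Fortin operator $\bPi_h$ to $\bu$, and then to reduce everything to the interpolation error of the Scott--Zhang operator $\bI_h$. So first I would take the candidate $\bw_h = \bPi_h\bu$ and verify that it actually lies in $\bZ_h$. Since $\bPi_h$ maps into $\bV_h$, the divergence $\Div(\bPi_h\bu)$ is a piecewise polynomial of degree $k-1$ on $\calT_h^{ct,e}$, that is $\Div(\bPi_h\bu)\in\pol_{k-1}(\calT_h^{ct,e})$. The commuting identity from the preceding proposition, applied with the divergence--free datum $\bu$, gives $\int_{\Omega_h^e}(\Div(\bPi_h\bu))q = \int_{\Omega_h^e}(\Div\bu)q = 0$ for every $q\in\pol_{k-1}(\calT_h^{ct,e})$. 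Testing with the admissible choice $q=\Div(\bPi_h\bu)$ forces $\Div(\bPi_h\bu)=0$ on $\Omega_h^e$, hence $\bPi_h\bu\in\bZ_h$. (The zero net-flux constraint $\int_{\p\Omega_h^i}\bPi_h\bu\cdot\bn=0$ defining $\bV_h$ then follows immediately from the divergence theorem on $\Omega_h^i$, and global continuity is inherited from the vanishing boundary traces of the range of $\bPi_0$ together with the continuity of $\bI_h\bu$.)

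Next I would bound $\|\nab(\bu-\bPi_h\bu)\|_{L^2(T)}$ on each $T\in\calT_h^e$. Using $\bPi_h={\bm 1}\cdot\bI_h + \bPi_0({\bm 1}-\bI_h)$ together with $\bPi_0|_T=\bPi_{0,T}$, I would record the algebraic identity $\bu-\bPi_h\bu = ({\bm 1}-\bPi_{0,T})\bm\xi$ with $\bm\xi := \bu-\bI_h\bu$. The triangle inequality and the scaling-stable $H^1$ bound for $\bPi_{0,T}$ from the previous lemma then yield $\|\nab(\bu-\bPi_h\bu)\|_{L^2(T)} \le \|\nab\bm\xi\|_{L^2(T)} + \|\nab\bPi_{0,T}\bm\xi\|_{L^2(T)} \le C(\|\nab\bm\xi\|_{L^2(T)} + h_T^{-1}\|\bm\xi\|_{L^2(T)})$. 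Since $\bPi_h\bu\in\bZ_h$, taking the infimum on the left gives exactly the first asserted estimate.

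The second estimate then follows by inserting the standard local approximation properties of the Scott--Zhang interpolant, namely $\|\nab(\bu-\bI_h\bu)\|_{L^2(T)}\le C h_T^k|\bu|_{H^{k+1}(\omega_T)}$ and $\|\bu-\bI_h\bu\|_{L^2(T)}\le C h_T^{k+1}|\bu|_{H^{k+1}(\omega_T)}$, so that the factor $h_T^{-1}$ in the first bound is absorbed and both terms scale like $h_T^k|\bu|_{H^{k+1}(\omega_T)}$. I do not expect a genuine obstacle in this final theorem: the substantive work, namely constructing $\bPi_{0,T}$ so that it commutes with $\bcurl$ and $\Div$ in the required sense and satisfies a mesh-robust $H^1$ stability estimate, has already been discharged in the preceding lemma and proposition. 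The only point demanding a little care is confirming that $\bPi_h\bu$ satisfies \emph{every} defining condition of $\bV_h$ (interelement continuity and the zero net-flux constraint), which as noted above is handled by the vanishing traces of $\bPi_0$ and by the identity $\Div(\bPi_h\bu)=0$.
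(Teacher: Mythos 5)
Your proposal is correct and follows essentially the same route as the paper: take $\bw_h=\bPi_h\bu$, observe it lies in $\bZ_h$ via the commuting property of the Fortin operator (testing with $q=\Div(\bPi_h\bu)\in\pol_{k-1}(\calT_h^{ct,e})$, exactly as one must), write $\bu-\bPi_h\bu=({\bm 1}-\bPi_0)(\bu-\bI_h\bu)$, and invoke the local $H^1$-stability of $\bPi_{0,T}$ together with Scott--Zhang approximation. Your added verification that $\bPi_h\bu$ satisfies all the defining conditions of $\bV_h$ (vanishing traces of the range of $\bPi_{0,T}$ and the net-flux constraint via the divergence theorem) is a correct elaboration of a step the paper leaves implicit.
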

\begin{proof}
If $\bu$ is divergence--free, then $\bPi_h \bu\in \bZ_h$.  Therefore by the definition of $\bPi_h$ and the $\bH^1$-stability of this operator,
\begin{align*}
\inf_{\bw_h\in \bZ_h} \|\nab (\bu-\bw_h)\|_{L^2(T)}
&\le  \|\nab (\bu-\bPi_h\bu)\|_{L^2(T)}\\
&\le \|\nab (\bu-\bI_h \bu)\|_{L^2(T)}+ \|\nab (\bPi_0({\bm 1}-\bI_h)\bu)\|_{L^2(T)}\\
&\le C (\|\nab (\bu-\bI_h \bu)\|_{L^2(T)}+h_T^{-1}\|\bu-\bI_h \bu\|_{L^2(T)}).
\end{align*}
We then use the approximation properties of the Scott-Zhang interpolant to obtain the result.
\end{proof}


\begin{thebibliography}{10}
	
\bibitem{Sethian1}
D.~Adalsteinsson and J.A.~Sethian,
{\em  A fast level set method for propagating interfaces},
J. Comput. Phys., 118(2):269--277, 1995.


\bibitem{Ander1}
D.M.~Anderson, G.B.~McFadden, and A.A.~Wheeler, 
{\em Diffuse-interface methods in fluid mechanics},
Annual review of fluid mechanics, Vol. 30, 139--165,  1998.

\bibitem{ArnoldQiu92}
D.N.~Arnold and J.~Qin,
{\em Quadratic velocity/linear pressure Stokes elements},
In R. Vichnevetsky, D. Knight, and G. Richter, editors, 
Advances in Computer Methods for Partial Differential Equations--VII, pages 28--34. IMACS, 1992.

\bibitem{St1} 
L.~Cattaneo, L.~Formaggia, G.F.~Iori, A.~Scotti, and P.~Zunino,
{\em Stabilized extended finite elements for the approximation of saddle point problems with unfitted interfaces}, 
Calcolo, 52(2):123–152, 2015.

\bibitem{BSBOOK}
S.~C.~Brenner, L.~R.~Scott,
{\em The Mathematical Theory of Finite Element Methods, Third Edition},
Springer, 2008.

\bibitem{BurmanHansbo12}
E.~Burman and P.~Hansbo,
{\em Fictitious domain finite element methods using cut elements: II. A stabilized Nitsche method},
Appl. Numer. Math., 62(4):328--341, 2012.

\bibitem{BurmanHansbo14}
E. Burman and P. Hansbo. 
{\em  Fictitious domain methods using cut elements: III. A stabilized Nitsche method for Stokes problem}, 
ESAIM Math. Model. Numer. Anal., 48(3):859–874, 2014.

\bibitem{cutFEM}
E.~Burman, S.~Claus, P.~Hansbo, M.G.~Larson, and A.~Massing,
{\em CutFEM: discretizing geometry and partial differential equations}, 
Internat. J. Numer. Methods Engrg., 104(7): 472--501, 2015.

\bibitem{CostabelMcIntosh2010}
M.~Costabel and A.~McIntosh,
{\em On {B}ogovski\u{\i} and regularized {P}oincar\'{e} integral operators for de {R}ham complexes on {L}ipschitz domains},
Math. Z., 265(2):297--320, 2010.

\bibitem{ElliotRan}  
C.M. Elliott, T. Ranner, 
{\em Finite element analysis for a coupled bulk–surface partial differential equation},
IMA J. Numer. Anal., 33(2):377--402, 2013.

\bibitem{Evans1} 
J.A.~Evans and T.J.~Hughes, T. J.,
{\em Isogeometric divergence-conforming B-splines for the steady Navier–Stokes equations}, 
Math. Models Methods Appl. Sci., 23(8):1421--1478, 2013.

\bibitem{FabEtAl22}
M.~Fabien, J.~Guzm\'an, M.~Neilan, and A.~Zytoon
{\em Low-order divergence-free approximations for the Stokes problem on Worsey-Farin and Powell-Sabin splits},
Comput. Methods Appl. Mech. Engrg., 390, paper no. 114444, 2022.

\bibitem{Falk1}
R.S.~Falk and M.~Neilan, 
{\em Stokes complexes and the construction of stable finite elements with pointwise mass conservation}, 
SIAM J. Numer. Anal., 51(2):1308–1326, 2013.

\bibitem{FGN20}
G.~Fu, J.~Guzman, and M.~Neilan,
{\em Exact smooth piecewise polynomial sequences on Alfeld splits},
Math. Comp., 89(323):1059--1091, 2020.

\bibitem{GR}
V.~Girault and P.-A.~Raviart,
{\em Finite element methods for Navier-Stokes equations: theory and algorithms}, 
Springer Series in Computational Mathematics 5, Springer-Verlag, Berlin, 1986.

\bibitem{Guz1}
J.~Guzm\'an, M.~Neilan,
{\em Conforming and divergence-free Stokes elements on general triangular meshes},
Math. Comp., 83(285):15--36, 2014.

\bibitem{Guz2}
J.~Guzm\'an, M.~Neilan,
{\em Inf-sup stable finite elements on barycentric refinements producing divergence--free approximations in arbitrary dimensions},
SIAM J. Numer. Anal., 56(5):2826--2844, 2018.

\bibitem{GuzmanMaxim18}
J.~Guzman and M.~Olshanskii,
{\em Inf-sup stability of geometrically unfitted Stokes finite elements},
Math. Comp., 87:20891--2112, 2018.

\bibitem{GS}
J.~Guzm\'an, and L.R.~Scott,
{\em The Scott-Vogelius finite elements revisited}, 
Math. Comp., 88(316): 515--529, 2019.

\bibitem{GS2018}
J.~Guzm\'an and L.R.~Scott,
{\em Cubic Lagrange elements satisfying exact incompressibility},
SMAI J. Comput. Math., 4:345--374,  2018.

\bibitem{GuzEtal20}
J.~Guzm\'an, A.~Lischke, and M.~Neilan,
{\em Exact sequences on Powell-Sabin splits},
Calcolo 57(2), no. 13, 2020.

\bibitem{hansbo2002unfitted} 
A.~Hansbo and P.~Hansbo, 
{\em An unfitted finite element method, based on Nitsches method, for elliptic interface problems},
Comput. Methods Appl. Mech. Engrg., 191(47-48):5537--5552, 2002.

\bibitem{St2}  
P.~Hansbo, M.G.~Larson, and S.~Zahedi,
{\em A cut finite element method for a Stokes interface problem}, 
Appl. Numer. Math., 85:90–114, 2014.

 \bibitem{St3}
M.~Kirchhart, S.~Groß, and A.~Reusken,
{\em Analysis of an XFEM discretization for Stokes interface problems},
SIAM J. Sci. Comput, 38(2):A1019--A1043, 2016.

\bibitem{St4}
G.~Legrain, N.~Mo\"es, and A.~Huerta,
{\em Stability of incompressible formulations enriched with X-FEM}, 
Comput. Methods Appl. Mech. Engrg., 197(21-24):1835--1849, 2008.

\bibitem{Lehr1}
C.~Lehrenfeld, J.~Sch\"oberl,
{\em High order exactly divergence-free hybrid discontinuous Galerkin methods for unsteady incompressible flows}, 
Comput. Methods Appl. Mech. Engrg., 307: 339--361, 2016.

\bibitem{Lehrenfeld16}
C.~Lehrenfeld, 
{\em High order unfitted finite element methods on level set domains using isoparametric mappings},
Comput. Methods Appl. Mech. Engrg., 300:716-733, 2016.

\bibitem{LaiSchumakerBook}
{M.-J.~Lai and L.L.~Schumaker},
{\em Spline functions on triangulations},
Encyclopedia of Mathematics and its Applications, 110. Cambridge University Press, Cambridge, 2007. 

\bibitem{LNO}
H.~Liu, M.~Neilan, and B.~Otus,
{\em A divergence-free finite element method for the Stokes problem with boundary correction},
arXiv:2105.10409, 2021.

\bibitem{MassingLarsonLoggRognes}
A.~Massing, M.G.~Larson, A.~Logg, and M.E. ~Rognes,
{\em A stabilized Nitsche fictitious domain method for the Stokes problem},
J. Sci. Comput., 61(3):604--628, 2018.

\bibitem{Olsh1}
M.A.~Olshanskii, 
{\em A low order Galerkin finite element method for the Navier–Stokes equations of steady incompressible flow: a stabilization issue and iterative methods},
Comput. Methods Appl. Mech. Engrg., 191(47-48):5515--5536, 2002.

\bibitem{SV1} 
L.~Scott and M.~Vogelius,
{\em  Norm estimates for a maximal right inverse of the divergence operator in spaces of piecewise polynomials}, 
RAIRO Mod\'{e}l. Math. Anal. Num\'{e}r, 19(1):111--143, 1985.

\bibitem{ScottZhang90}
L.R.~Scott and S.~Zhang,
{\em Finite element interpolation of nonsmooth functions satisfying boundary conditions},
Math. Comp., 54(190):483--493, 1990.

\bibitem{Stein}	
E. M. Stein. {\em Singular Integrals and Differentiability Properties of Functions}, 
Princeton Mathematical Series, No. 30 Princeton University Press, Princeton, N.J. 1970.
	
\bibitem{XFEM}
M.~Mo\"es, J.~Dolbow, and T.~Belytschko,  
{\em A finite element method for crack growth without remeshing}, 
Internat. J. Numer. Methods Engrg., 46(1):131--150, 1999.

	
\bibitem{Zhang} 
S.~Zhang, 
{\em A new family of stable mixed finite elements for the 3D Stokes equations},
Math. Comp., 74(250): 543--554, 2005.

\bibitem{Zhang2} 
S.~Zhang, 
{\em Divergence-free finite elements on tetrahedral grids for $k\ge6$}, 
Math. Comp., 80:669--695, 2011.
\end{thebibliography}
\end{document}